\newcommand{\bqa}{\begin{equation}}
\newcommand{\eqa}{\end{equation}}
\newcommand{\bea}{\begin{eqnarray}}
\newcommand{\eea}{\end{eqnarray}}
\newcommand{\bna}{\begin{eqnarray*}}
\newcommand{\ena}{\end{eqnarray*}}
\newcommand{\bma}{\begin{pmatrix}}
\newcommand{\ema}{\end{pmatrix}}
\newcommand{\mk}{\mathfrak}
\def\bz{{\mathbb Z}}
\def\br{{\mathbb R}}
\def\sl2z{SL(2,\bz)}
\def\psl2z{PSL(2,\bz)}
\def\gl2r{GL(2,\br)}
\def\re{{\rm Re}}
\def\im{{\rm Im}}
\def\res{{\rm Res}}
\def\C{\mathbb{C}}
\def\R{\mathbb{R}}
\def\Z{\mathbb{Z}}
\newtheorem{lemma}{Lemma}[section]
\newtheorem{thm}[lemma]{Theorem}
\newtheorem{cor}{Corollary}
\theoremstyle{definition}
\newcommand{\bit}{\begin{itemize}}
\newcommand{\eit}{\end{itemize}}
\begin{document}
\centerline{\Large\bf
Central Values of $GL(2)\times GL(3)$ Rankin-Selberg $L$-functions
}
\centerline{\Large\bf
 with Applications
\footnote{This work is supported by the Natural Science Foundation
of Shandong Province (Grant No. ZR2014AQ002) and
Innovative Research Team in University (Grant No. IRT16R43).
}}
\bigskip

\centerline{\large Qinghua Pi}
\address{School of Mathematics and Statistics, Shandong University, Weihai, Weihai 264209, China}
\email{qhpi@sdu.edu.cn}
\numberwithin{equation}{section}

\bigskip

\noindent{\bf Abstract} \,{ Let $f$ be a normalized holomorphic cusp form
for $SL_2(\mathbb{Z})$ of weight $k$ with $k\equiv0\bmod 4$.
By the Kuznetsov trace formula for $GL_3(\R)$, we
obtain the first moment of central values of $L(s,f\otimes \phi)$,
where $\phi$ varies over Hecke-Maass cusp forms for $SL_3(\Z)$.
As an application, we obtain a non-vanishing result for $L(1/2,f\otimes\phi)$
and show that such $f$
is determined by $\{L(1/2,f\otimes\phi)\}$ as $\phi$ varies.}

\medskip

\noindent {\bf Keywords}: central values, the Rankin-Selberg
$L$-function, the Kunznetsov trace formula

\noindent{\bf MSC} 11F11, 11F67

\section{Introduction}
Special values of $L$-functions are expected to carry
important information on relevant arithmetic and geometric objects.
In
1997, Luo and Ramakrishnan \cite{LR1997} asked the question that to
what extent modular forms are actually characterized by their
special $L$-values. In the same paper, they considered
the moment of $\chi_d(p)L(1/2,f\otimes\chi_d)$ as $d$ varies,
and showed that a cuspidal
normalized holomorphic Hecke newform $f$ is uniquely determined by the family
$\{L(1/2,f\otimes\chi_d)\}$ for all quadratic characters $\chi_d$.
Since then, this problem has been studied by many authors
(\cite{Lu1999}, \cite{CD2005},
\cite{Li2007}, \cite{Li2009}, \cite{GHS2009},  \cite{Mu2010},
\cite{Liu2010}, \cite{Pi2010}, \cite{Liu2011}, \cite{Zh2011},
\cite{Ma2014},  \cite{Pi2014},
\cite{Su2014},\cite{MS2015}).

Let $f$ be a normalized holomorphic Hecke-cusp form for $SL_2(\Z)$ of fixed weight $k$ with $k\equiv 0\bmod 4$.
Let $\{\phi\}$ be a Hecke basis of the space of Maass cusp forms for $SL_3(\Z)$.
In this paper, we consider central values
of Rankin-Selberg $L$-functions $L(s,f\otimes \phi)$.
By calculating the twisted moment of $A_\phi(p,p)L(1/2,f\otimes \phi)$
where $A_\phi(p,p)$ is the Hecke eigenvalue of $\phi$ at $(p,p)$,
we show that
$f$ is uniquely determined
by the family $\{L(1/2,f\otimes\phi)\}$
as $\phi$ varies over a Hecke basis of the space of Maass cusp forms for $SL_3(\Z)$.
\medskip

To state our result, we give the following notations.
\bit
\item
For $\phi$ a Hecke-Maass cusp form for $SL_3(\Z)$, let $\bm\mu_\phi=(\mu_1,\mu_2,\mu_3)$
be the Langlands parameter of $\phi$. We know that $\bm\mu_\phi$ is a point in
the region
\bna
\Lambda_{1/2}':=\left\{(\mu_1,\mu_2,\mu_3)\in\C^3,\quad
\begin{aligned}
&| \re(\mu_j)|\leq \frac{1}{2},\quad  \mu_1+\mu_2+\mu_3=0,\\
&\{\mu_1,\mu_2,\mu_3\}=\{-\overline\mu_1,-\overline\mu_2,-\overline\mu_3\}
\end{aligned}
 \right\}
\ena
in the Lie algebra $\mk {sl}_3(\C)$.
Let
\bna
\nu_1=\frac{1}{3}(\mu_1-\mu_2),\quad \nu_2=\frac{1}{3}(\mu_2-\mu_3),\quad \nu_3=-\nu_1-\nu_2
\ena
which are known as the spectral coordinates.

\item Fix a point $\bm\mu^0\in\Lambda'_{1/2}$ such that
\bna
|\mu^0_j|\asymp \|\bm \mu^0\|:=T,\quad 1\leq j\leq 3.
\ena
 As in \cite{BB2015} (or see \cite{HLZ2017}),
we choose the test function $h(\bm\mu)$ to
localize at a ball of radius $M=T^\theta$  with $0<\theta<1$
about $w(\bm \mu^0)$, where $w$ are elements in the Weyl group $W$.
For a precise definition of $h(\bm\mu)$, we refer to section \ref{subsec-choice-testfunc}.\item Let $d\bm\mu=d\mu_1 d\mu_2$ and $d_{\mathrm{spec}}\bm\mu=\mathrm{spec}(\bm\mu)d\bm\mu$ with
\bna
\mathrm{spec}(\bm\mu)=\prod_{j=1}^3\left(3\nu_j\tan\left(\frac{3\pi}{2}\nu_j \right)\right).
\ena
\eit
\medskip

Our main result is in the following.
\begin{thm}\label{thm-main}
Let $f$ be a normalized holomorphic Hecke cusp form for $SL_2(\Z)$ of weight $k$ with $k\equiv 0\bmod 4$.
Let $\{\phi\}$ be a Hecke basis of the space of cusp forms for $SL_3(\Z)$
and $A_\phi(p,p)$ be the Hecke eigenvalue of $\phi$ at $(p,p)$.
One has
\bea
\sum_{\phi}\frac{h(\bm\mu_\phi)}{\mathcal N_\phi}A_\phi(p,p)L(1/2,f\otimes \phi)
=\frac{\lambda_f(p)}{p^{3/2}}M_k(h)
+O_{k,\epsilon}(p^{\frac{7}{32}+\epsilon}T^{\frac{5}{2}+\epsilon}M^2)\label{main-formula}
\eea
for $T\gg_{k,\epsilon} p^{3+\frac{7}{16}+\epsilon}$.
Here $\mathcal N_\phi$ is the normalized factor defined
in (\ref{normalized-factor-cusp}) and
\bna
M_k(h)=
\frac{1}{192\pi^5}\iint_{\re(\bm\mu)=0}h(\bm \mu)\left(1+
\prod_{j=1}^3\frac{\Gamma(\frac{k}{2}+\mu_j)}
{\Gamma(\frac{k}{2}-\mu_j)}\right)d_{\mathrm{spec}}\bm \mu.
\ena
\end{thm}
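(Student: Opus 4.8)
The plan is to combine an approximate functional equation for $L(1/2,f\otimes\phi)$ with the $GL_3(\R)$ Kuznetsov trace formula. I first recall that the degree-$6$ Rankin--Selberg $L$-function has the Dirichlet series $L(s,f\otimes\phi)=\sum_{m,n\geq1}\lambda_f(n)A_\phi(m,n)(m^2n)^{-s}$ for $\re(s)>1$ and a functional equation $\Lambda(s,f\otimes\phi)=\Lambda(1-s,f\otimes\widetilde\phi)$ whose completed factor carries $\prod_{j=1}^3\Gamma_{\C}(s+\tfrac{k-1}{2}+\mu_j)$ and analytic conductor $\asymp T^3$; here $k\equiv0\bmod4$ together with level one makes the root number $+1$. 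The balanced approximate functional equation then writes $L(1/2,f\otimes\phi)$ as a principal sum $\sum_{m,n}\lambda_f(n)A_\phi(m,n)(m^2n)^{-1/2}V\big(\tfrac{m^2n}{X};\bm\mu_\phi\big)$ plus a dual sum $\prod_{j=1}^3\tfrac{\Gamma(k/2+\mu_j)}{\Gamma(k/2-\mu_j)}\sum_{m,n}\lambda_f(n)A_\phi(n,m)(m^2n)^{-1/2}\widetilde V\big(\tfrac{m^2n}{X};\bm\mu_\phi\big)$, where $X\asymp T^{3/2}$, the weights $V,\widetilde V$ come from contour integrals of gamma-factor ratios, decay rapidly once $m^2n\gg T^{3/2+\epsilon}$, and satisfy $V,\widetilde V=1+O_A\big((m^2n/T^{3/2})^A\big)$ for small $m^2n$ uniformly over the support of $h$.

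I then insert this into the left side of (\ref{main-formula}), interchange summation, and use that $A_\phi(p,p)$ is real (since $\overline{A_\phi(a,b)}=A_\phi(b,a)$) to present each inner $\phi$-sum in the shape $\sum_\phi\mathcal N_\phi^{-1}h(\bm\mu_\phi)\,\overline{A_\phi(p,p)}\,A_\phi(n_1,n_2)\,W(\bm\mu_\phi)$ required by the $SL_3(\Z)$ Kuznetsov formula, with $(n_1,n_2)=(m,n)$ in the principal sum and $(n,m)$ in the dual one. Completing the cuspidal spectrum by the minimal and maximal-parabolic Eisenstein spectrum and subtracting the latter, the geometric side decomposes as a diagonal term plus Kloosterman terms attached to the Weyl elements $w_4,w_5,w_6$, so the left side of (\ref{main-formula}) equals $\mathscr D+\mathscr K-\mathscr E$. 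The diagonal forces $(n_1,n_2)=(p,p)$, so from each of the two sums only the term $m=n=p$ contributes to $\mathscr D$; as $p^3\ll T^{3/2}$ in the stated range the weights equal $1$ up to a negligible error, and the Plancherel density of the diagonal term is precisely $\tfrac{1}{192\pi^5}\,d_{\mathrm{spec}}\bm\mu$. Hence $\mathscr D=\tfrac{\lambda_f(p)}{p^{3/2}}M_k(h)+O(T^{-A})$, the summand $1$ in $M_k(h)$ coming from the principal sum and the product of gamma ratios from the dual sum.

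It remains to show $\mathscr E+\mathscr K\ll p^{7/32+\epsilon}T^{5/2+\epsilon}M^2$. For $\mathscr E$ one uses that the relevant Eisenstein central values factorise into products of lower-degree central $L$-values of $f$, of $f\otimes u$ for $GL(2)$ Hecke--Maass forms $u$, and of $\zeta$, each bounded polynomially in $T$ for $f$ fixed (convexity suffices), while the associated Hecke eigenvalues at $(p,p)$ are $\ll p^{7/64}$ by Kim--Sarnak; inserting the rapid decay of $h$ off the ball of radius $M$ about $w(\bm\mu^0)$ and summing trivially over $m^2n\ll T^{3/2+\epsilon}$ handles $\mathscr E$. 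For $\mathscr K=\sum_{w\in\{w_4,w_5,w_6\}}\mathscr K_w$ one combines, for each $w$, the Weyl--Deligne bounds for the $GL(3)$ Kloosterman sums $S_w(p,p;n_1,n_2;\mathbf c)$ with the estimates for the attached Bessel-type integral transforms of the pair $(h,V)$ obtained as in \cite{BB2015} and \cite{HLZ2017}: the transforms decay rapidly once $\|\mathbf c\|$ exceeds a threshold $\asymp(n_1n_2p^2)^{1/2}T^{\ast}M^{-\ast}$ and are of controlled size below it, which truncates the $\mathbf c$-sum. Executing the $\mathbf c$-sum and then the $(m,n)$-sum over $m^2n\ll T^{3/2+\epsilon}$ and collecting the factors --- with the $w_4,w_5$ terms treated by a secondary $GL(2)$ Kuznetsov/large-sieve step in which, both of the indices $\ell_1,\ell_2$ being $p$, the squared Hecke eigenvalue $\lambda_u(p)^2\ll p^{7/32}$ of Kim--Sarnak enters --- yields $\mathscr K\ll p^{7/32+\epsilon}T^{5/2+\epsilon}M^2$. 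Assembling the three pieces and recalling $T\gg_{k,\epsilon}p^{3+7/16+\epsilon}$ completes the proof. The main obstacle is this last step: obtaining estimates for the $GL(3)$ Weyl-element integral transforms attached to a spectral test function concentrated at scale $T$ in a ball of radius $M$ that are sharp enough --- after the moduli have been truncated accordingly --- to recover the exponents $\tfrac{7}{32},\tfrac{5}{2},2$, the treatment of the $w_4,w_5$ terms being where the Kim--Sarnak bound enters.
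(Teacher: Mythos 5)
Your overall architecture (approximate functional equation, then the $GL_3$ Kuznetsov formula, diagonal giving the main term with the $1$ from the principal sum and the gamma-ratio product from the dual sum) matches the paper, and your treatment of the diagonal is essentially correct. But there is a genuine gap, and it is exactly the step you yourself flag as ``the main obstacle'': you never actually bound the Kloosterman contributions $\mathscr K_{w_4},\mathscr K_{w_5},\mathscr K_{w_l}$, you only sketch a strategy (Weil bounds, transform estimates, a secondary $GL(2)$ Kuznetsov/large-sieve step) and assert it recovers $p^{7/32+\epsilon}T^{5/2+\epsilon}M^2$. That assertion is not substantiated, and it also misattributes the source of the error term. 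In the paper the error term $p^{7/32+\epsilon}T^{5/2+\epsilon}M^2$ comes entirely from the \emph{maximal Eisenstein spectrum}: one has $A^{\max}_{u,g}(p,p)\ll p^{7/32+\epsilon}$ (note: $p^{7/32}$, i.e.\ the Kim--Sarnak bound $p^{7/64}$ essentially squared, not the $p^{7/64}$ you wrote for the eigenvalue at $(p,p)$), and Weyl's law over the $O(M)$-many $GL(2)$ forms $g$ with $t_g$ near $T$ times the $u$-integral of length $M$ gives $T^{3/2+\epsilon}M\cdot TM=T^{5/2+\epsilon}M^2$. No large sieve and no nontrivial estimation of $GL(3)$ Kloosterman sums is needed anywhere.

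The idea you are missing is that the hypothesis $T\gg_{k,\epsilon}p^{3+\frac{7}{16}+\epsilon}$ is used not only at the end to compare main and error terms, but to make the geometric (Kloosterman) terms \emph{empty or negligible outright}. After the approximate functional equation restricts to $m_1^2m_2\le T^{3+\epsilon}$, the stationary-phase truncation lemmas of Blomer--Buttcane (Lemmas \ref{lemma-truncation-1} and \ref{lemma-truncation-2}) show that $\Phi_{w_4}$ and $\Phi_{w_l}$ are negligible unless the arguments exceed $T^{3-\epsilon}$ (resp.\ $\mathcal Y\gg T^{1-\epsilon}$); combined with the structural constraints $pD_1=m_2D_2^2$, $D_1\ge 1$, these survival conditions force $p\gg T^{1-\epsilon}$, which contradicts the hypothesis on $T$, so $\mathscr K_{w_4}$ and $\mathscr K_{w_l}$ contribute nothing. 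The one term that needs extra work is $w_5$, where after opening the incomplete Kloosterman sum and applying $GL(2)$ Voronoi summation in $m_2$ (not a large sieve), the surviving range forces $p^{1+\epsilon}\gg T^{4/9-\epsilon}$, again contradicting the hypothesis. Without this observation your proof is incomplete at its hardest point, and it is unclear that your proposed transform-plus-large-sieve route would produce the stated exponents at all.
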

Note that
$M_k(h)\asymp_{k}T^3M^2$.
On taking $p=1$, the above theorem implies the existence
of non-vanishing  of $L(1/2,f\otimes \phi)$ as $\phi$ varies.
Moreover, by the strong multiplicity one theorem (see \cite{PS1979}),
we have the following corollary.
\begin{cor}
Let $f$ and $f'$ be two normalized holomorphic cusp forms for $SL_2(\Z)$
of fixed weight $k$ with $k\equiv0\bmod 4$.
If $L(1/2,f\otimes\phi)=L(1/2,f'\otimes\phi)$ for all Hecke-Maass cusp forms
$\phi$ for $SL_3(\Z)$,
then $f=f'$.
\end{cor}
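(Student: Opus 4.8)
The plan is to obtain the corollary as a direct consequence of Theorem \ref{thm-main} via a subtraction argument, exploiting the fact that the test function $h(\bm\mu)$ and the prime $p$ in (\ref{main-formula}) depend only on the spectral data of the forms $\phi$ and are entirely independent of the $GL(2)$ form under consideration. Accordingly, I would apply the asymptotic (\ref{main-formula}) twice, once for $f$ and once for $f'$, using the same $h$, the same prime $p$, and the same spectral scale $T$. Under the hypothesis $L(1/2,f\otimes\phi)=L(1/2,f'\otimes\phi)$ for every Hecke-Maass cusp form $\phi$ for $SL_3(\Z)$, the two spectral averages on the left are identical term by term, so subtracting the two instances of (\ref{main-formula}) cancels the entire sum over $\phi$ and (after combining the two error terms, which are of the same order) leaves
\[
\frac{\lambda_f(p)-\lambda_{f'}(p)}{p^{3/2}}\,M_k(h)=O_{k,\epsilon}\!\left(p^{\frac{7}{32}+\epsilon}T^{\frac{5}{2}+\epsilon}M^2\right).
\]

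I would then insert the size estimate $M_k(h)\asymp_k T^3M^2$ recorded immediately after the theorem; in particular $M_k(h)\neq0$, so it may be divided out, yielding
\[
|\lambda_f(p)-\lambda_{f'}(p)|\ll_{k,\epsilon}p^{\frac{3}{2}+\frac{7}{32}+\epsilon}\,T^{-\frac12+\epsilon}.
\]
The admissible range $T\gg_{k,\epsilon}p^{3+\frac{7}{16}+\epsilon}$ of Theorem \ref{thm-main} is exactly what makes this useful: for each fixed prime $p$ it permits $T$ to be chosen arbitrarily large, and letting $T\to\infty$ drives the right-hand side to $0$. Hence $\lambda_f(p)=\lambda_{f'}(p)$ for every prime $p$.

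To conclude, I would invoke that $f$ and $f'$ are normalized Hecke eigenforms for $SL_2(\Z)$ of the same weight $k$ with matching Hecke eigenvalues at all primes; by multiplicativity these eigenvalues determine all Fourier coefficients, and the strong multiplicity one theorem \cite{PS1979} then identifies the associated automorphic representations, giving $f=f'$. The argument requires no new analytic input beyond Theorem \ref{thm-main}, and the only point needing care is the quantitative separation of main and error terms: one must verify that the factor $T^3$ in $M_k(h)$ strictly dominates the factor $T^{5/2+\epsilon}$ in the error once $p$ is held fixed. This is precisely the $T^{-1/2+\epsilon}$ gain displayed above, and it is guaranteed by the lower bound on $T$; this is therefore the single step where the strength of the main theorem is actually consumed.
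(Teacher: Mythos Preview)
Your proposal is correct and follows exactly the approach the paper intends: the corollary is stated as a direct consequence of Theorem~\ref{thm-main} together with the strong multiplicity one theorem \cite{PS1979}, and your subtraction argument with $T\to\infty$ is precisely the way to extract $\lambda_f(p)=\lambda_{f'}(p)$ from (\ref{main-formula}). The only detail worth making explicit (which you already implicitly use) is that $M_k(h)$ and the implied constant in the error term depend only on the common weight $k$, so they are identical in the two applications of the theorem.
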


We remark that central values of $L(s,f\otimes\phi)$
vanish for $k\equiv2\bmod 4$. In this case we can consider $\frac{d}{ds}L(1/2,f\otimes\phi)$ instead of $L(1/2,f\otimes g)$ as in
\cite{Zh2011}. But we do not address this here.
\medskip

This paper is arranged as follows. In section 2, we review
the Kuznetsov trace formula  in the version of \cite{Bu2014},
choose the test function and
give the approximate functional equation of the Rankin-Selberg
$L$-function.
Theorem \ref{thm-main} will be proved in section 3, where we apply the approximate functional equation
and the Kuznetsov trace formula, and give estimations on each terms.
The main term in (\ref{main-formula})
comes from the geometric term associated to the trivial Weyl's element,
and the error term comes from the maximal Eisenstein series in the continuous spectrum.
\section{Preliminaries}
In this section, we review the definition of automorphic forms on $SL(3,\Z)$
in \cite{Bl2013} (or  see \cite{Go2006}), the Kuznetsov's trace formula
in \cite{Bu2014} (or see \cite{BB2015}) and the approximate functional equation of Rankin-Selberg $L$-functions.

Let
\bna
\mathfrak h^3&=&\left\{z=\bma 1&x_2&x_3\\&1&x_1\\&&1\ema\bma
y_1y_2&\\&y_1\\&&1\ema,\quad x_1,x_2,x_3\in \mathbb R,y_1,y_2\in
\mathbb R^+\right\}\\
&\simeq&GL_3(\R)/O_3(\R)Z(\R)
\ena
be the generalized Poincare upper half plane.
Given a spectral parameter $(\nu_1,\nu_2)\in\C^2$,
the  function $I_{\nu_1,\nu_2}$
on $\mk h^3$ is defined by
\bna
I_{\nu_1,\nu_2}(z)=y_1^{1+2\nu_1+\nu_2}y_2^{1+\nu_1+2\nu_2}
\ena
and the Jacquet-Whittaker function is defined by
\bna
\begin{aligned}
\mathcal W_{\nu_1,\nu_2}^{\pm}(z):&=\int_{\R^3}I_{\nu_1,\nu_2}
\left(\bma&&1\\&1\\1 \ema \bma 1&u_2&u_3\\&1&u_1\\&&1\ema z\right)e(-(u_1\pm u_2))du_1du_2du_3
\end{aligned}
\ena
where $e(x)=\exp(2\pi i x)$.

Let $\nu_3=-\nu_1-\nu_2$ and
\bea
\mu_1=\nu_1+2\nu_2,\quad \mu_2=\nu_1-\nu_2,\quad\mu_3=-2\nu_1-\nu_2.\label{Langlands-parameter}
\eea
We will simultaneously use $\bm \mu=(\mu_1,\mu_2,\mu_3)$ and $(\nu_1,\nu_2,\nu_3)$ coordinates,
\bna
\nu_1=\frac{1}{3}(\mu_1-\mu_2),\quad \nu_2=\frac{1}{3}(\mu_2-\mu_3),\quad
\nu_3=-\nu_1-\nu_2.
\ena

\subsection{Automorphic forms for $SL(3,\Z)$}
\subsubsection{Hecke-Maass cusp forms}
A Hecke-Maass cusp form for $\Gamma=SL_3(\Z)$
of type $(\frac{1}{3}+\nu_1,\frac{1}{3}+\nu_2)$
is a function $\phi:\Gamma\backslash\mk h^3\rightarrow
\C$ which
has the Fourier expansion
\bna
\phi(z)=\sum_{m_1=1}^\infty\sum_{m_2\neq 0}\frac{A_\phi(m_1,m_2)}{m_1|m_2|}
\sum_{\gamma\in U_2\backslash SL_2(\Z)}\mathcal W_{\nu_1,\nu_2}^{\mathrm{sgn}(m_2)}
\left(\bma m_1|m_2|\\&m_1\\&&1\ema\bma\gamma&\\&1\ema z\right)c_{\nu_1,\nu_2}.
\ena
Here $A_\phi(m_1,m_2)$
are eigenvalues of $\phi$ at $(m_1,m_2)$ satisfying
\bna
A_\phi(m_1,m_2)\ll m_1m_2,
\ena
$\mathcal W^{\pm}_{\nu_1,\nu_2}(z)$ is the Jacquet-Whittaker function and $c_{\nu_1,\nu_2}$
is a constant depending only on $\nu_1$ and $\nu_2$ (see formula (2.13) in \cite{Bl2013}).

Let $\bm\mu_\phi=(\mu_1,\mu_2,\mu_3)$ be the Langlands parameter of $\phi$
where $\mu_j$ are given by (\ref{Langlands-parameter}).
The $L$-function associated to $\phi$ is defined by
\bna
L(s,\phi):=\sum_{m\geq 1}\frac{A_\phi(1,m)}{m^s}
\ena
for $\re(s)>2$.
It has analytic continuation for $s\in\C$ and satisfies the functional equation
\bna
\Lambda(s,\phi)=\prod_{j=1}^3\Gamma_\R(s+\mu_j)L(s,\phi)
=\Lambda(1-s,\phi^\vee).
\ena
Here $\Gamma_\R(s)=\pi^{-\frac{s}{2}}\Gamma(s/2)$
and
$\phi^\vee$ is the dual Hecke-Maass cusp form of $\phi$ with
\bna
A_{\phi^\vee}(m_1,m_2)=A_{\phi}(m_2,m_1),\quad
\bm \mu_{\phi^\vee}=(-\mu_1,-\mu_2,-\mu_3).
\ena

\subsubsection{The minimal Eisenstein series}\label{subsubsec-Eisenstein-series-associated-to-minimal}
Let $P_{1,1,1}$ be the standard minimal parabolic subgroup of $GL_3$ and $U_3$
be the unipotent radical of $P_{1,1,1}$.
Given a spectral parameter $(\nu_1,\nu_2)\in\C^2$, let $\bm \mu=(\mu_1,\mu_2,\mu_3)$
be the Langlands parameter given by (\ref{Langlands-parameter}).
The minimal Eisenstein series
\bna
E_{\nu_1,\nu_2}^{\mathrm{min}}(z):=\sum_{\gamma\in U_3(\Z)\backslash \Gamma}I_{\nu_1,\nu_2}(\gamma z)
\ena
is defined for $\re(\nu_1)$ and $\re(\nu_2)$ sufficient large and has meromorphic continuation
to all $(\nu_1,\nu_2)\in\C^2$.
The Hecke eigenvalues  $A^{\min}_{\nu_1,\nu_2}(m,n)$
of  $E^{\mathrm{min}}_{\nu_1,\nu_2}(z)$ at $(m,n)$
are defined by
\bna
A^{\min}_{\nu_1,\nu_2}(1,n)=\sum_{d_1d_2d_3=n}d_1^{-\mu_1}d_2^{-\mu_2}d_3^{-\mu_3}
\ena
and by Hecke relations
\bna
A^{\min}_{\nu_1,\nu_2}(m,1)&=&\overline{A^{\min}_{\nu_1,\nu_2}(1,m)},\nonumber\\
A^{\min}_{\nu_1,\nu_2}(m_1,m_2)&=&\sum_{d\mid (m_1,m_2)}\mu(d)A^{\min}_{\nu_1,\nu_2}\left(\frac{m_1}{d},1\right)
A^{\min}_{\nu_1,\nu_2}\left(1,\frac{m_2}{d}\right).
\ena
The $L$-function associated to $E^{\min}_{\nu_1,\nu_2}(z)$ is
\bna
L(s,E^{\min}_{\nu_1,\nu_2}):=\sum_{m\geq 1}\frac{A^{\min}_{\nu_1,\nu_2}(1,m)}{m^s}
=\zeta(s+\mu_1)\zeta(s+\mu_2)\zeta(s+\mu_3)
\ena
where $\mu_i$ are given by (\ref{Langlands-parameter}).

\subsubsection{The Maximal Eisenstein series}
\label{subsub-Eisenstein-Series-associated-to-Maximal}
Let $g:SL_2(\Z)\backslash \mk h^2\rightarrow\C$ be a Hecke-Maass cusp form
with the spectral parameter $it_g\in i\R$ and
 Hecke eigenvalues  $\lambda_g(m)$. We assume that $g$ is normalized by
 $\|g\|=1$.
Let
$$P_{2,1}=\left[\begin{matrix} *&*&*\\ *&*&*\\&&*\end{matrix}\right]$$
be the standard maximal parabolic subgroup of $GL_3$.
For $u\in\C$, the maximal Eisenstein series
\bna
E^{\max}_{u,g}(z):=\sum_{\gamma\in P_{2,1}(\Z)\backslash\Gamma}
\det(\gamma z)^{\frac{1}{2}+u} g(\mk m_{P_{2,1}}(\gamma z))
\ena
 is defined for $\re(u)$ sufficient large.
Here $\mk m_{P_{2,1}}$
is the restriction to the upper left corner,
\bna
\mk m_{P_{2,1}}:\mk h^3\rightarrow \mk h^2,\quad
\bma y_1y_2&y_1x_2&x_3\\&y_1&x_1\\&&1\ema\mapsto\bma y_2&x_2\\&1\ema.
\ena

The Hecke eigenvalue $A_{u,g}^{\max}(m,n)$ of
 $E^{\max}_{u,g}$ at $(m,n)$ is defined by
\bea
A_{u,g}^{\max}(1,n)=\sum_{d_1d_2=|n|}\lambda_g(d)d_1^{-u}d_2^{2u}\label{Fourier-coef-maximal-Eisenstein-1}
\eea
and by the Hecke relations
\bea
A^{\max}_{u,g}(m,1)&=&\overline{A^{\max}_{u,g}(1,m)},\nonumber\\
A^{\max}_{u,g}(m_1,m_2)&=&\sum_{d\mid (m_1,m_2)}\mu(d)
A_{u,g}^{\max}\left(\frac{m_1}{d},1\right)
A_{u,g}^{\max}\left(1,\frac{m_2}{d}\right)\label{Fourier-coef-maximal-Eisenstein-2}.
\eea
The $L$-function associated to $E^{\max}_{u,g}(z)$ is
\bna
L(s,E^{\max}_{u,g})=\sum_{m\geq 1}\frac{A_{u,g}^{\max}(1,m)}{m^s}
=\zeta(s-2u)L(s+u,g)
\ena
and the complete $L$-function is
\bna
\Lambda(s,E^{\max}_{u,g})=\prod_{i=1}^3\Gamma_\R(s+\mu_i')L(s,E^{\max}_{u,g})
=\Lambda(1-s,E^{\max}_{-u,g}),
\ena
where
\bea
\mu_1'=u+it_g,\quad\mu_2'=u-it_g,\quad\mu_3'=-2u.
\label{Langlands-parameter-Maximal-Eisenstein}
\eea
\subsection{The Kuznetsov trace formula}\label{subsec-kuznetsov}
We recall the Kuznetsov trace formula in the version of \cite{Bu2014}.
Let $d\bm\mu=d\mu_1d\mu_2$ be the standard measure
on the Lie algebra
\bna
\Lambda_\infty:=\{\bm\mu=(\mu_1,\mu_2,\mu_3)\in\C^3,\quad \mu_1+\mu_2+\mu_3=0\}.
\ena
We set $d_{\mathrm{spec}}(\bm \mu)=\mathrm{spec}(\bm\mu)d\bm\mu$ with
\bna
\mathrm{spec}(\bm \mu):=\prod_{j=1}^3\left(3\nu_j\tan\left(\frac{3\pi}{2}\nu_j \right)\right).
\ena

\subsubsection{Normalized factors}
The normalized factors are defined as follows.
\bit
\item For $\phi$ a Hecke-Maass cusp form with $\bm \mu_\phi=(\mu_1,\mu_2,\mu_3)$,
we denote by
\bea
\mathcal N_\phi:=\|\phi\|^2\prod_{j=1}^3
\cos\left(\frac{3}{2}\pi \nu_{j}\right).\label{normalized-factor-cusp}
\eea
Note that for $\bm\mu_\phi=(\mu_1,\mu_2,\mu_3)$ with $\mu_i\asymp T$, one has
\bna
\mathcal N_\phi\asymp \res_{s=1}L(s,\phi\otimes\phi^\vee)\ll T^\epsilon.
\ena

\item
 For $E^{\mathrm{min}}_{\nu_1,\nu_2}(z)$ the minimal Eisenstein series
 with the Langlands parameter $\bm\mu(E^{\min}_{\nu_1,\nu_2})=(\mu_1,\mu_2,\mu_3)$,
the normalized factor is defined by
\bna
\mathcal N^{\min}_{\nu_1,\nu_2}:=\frac{1}{16}\prod_{j=1}^3|\zeta(1+3\nu_{j})|^2.
\ena

\item
For $E^{\max}_{u,g}(z)$ the maximal Eisenstein series, we define
\bna
\mathcal N^{\max}_{u,g}:=8L(1,\mathrm{Ad}^2g)|L(1+3u,g)|^2.
\ena
\eit

\subsubsection{Kloosteman Sums}
Two type of Kloosterman sums are defined as follows.
Assume $D_1\mid D_2$, we have the incomplete Kloosterman sum
\bna
\tilde S(n_1,n_2,m_1,D_1,D_2):=
\sum_{C_1(\bmod D_1),C_2(\bmod D_2)\atop{(C_1,D_1)=(C_2,D_2/D_1)=1}}
e\left(n_2\frac{\overline C_1 C_2}{D_1}+m_1\frac{\overline C_2}{D_2/D_1}+n_1\frac{C_1}{D_1}\right).
\ena
The complete Kloosterman sum is defined by
\bna
&&S(n_1,n_2,m_1,m_2,D_1,D_2)\\
&&:=\sum_{B_1,C_1\bmod D_1\atop{B_2,C_2\bmod D_2\atop{D_1C_2+B_1B_2+D_2C_1=0\bmod D_1D_2\atop{(B_j,C_j,D_j)=1}}}}
e\left(\frac{n_1B_1+m_1(Y_1D_2-Z_1B_2)}{D_1}+\frac{m_2B_2+n_2(Y_2D_1-Z_2B_1)}{D_2}\right)
\ena
where $Y_jB_j+Z_jC_j\equiv 1\bmod D_j$ for $j=1,2$.

By the standard (Weil-type) bounds we have (see formulas 3.1 and 3.2 in \cite{BB2015})
\bna
\tilde S(n_1,n_2,m_1,D_1,D_2)\ll\left((m_1,D_2/D_1)D_1^2,(n_1,n_2,D_1)D_2)\right)(D_1D_2)^\epsilon
\ena
and
\bna
 S(n_1,n_2,m_1,m_2,D_1,D_2)\ll(D_1D_2)^{1/2+\epsilon}
 \left\{(D_1,D_2)(m_1n_1,[D_1,D_2])(m_2n_2,[D_1,D_2])\right\}^{1/2}.
\ena
\subsubsection{Integral kernels }
Following Theorems 2 and 3 in \cite{Bu2014},
the integral kernels are given as follows.
For $s\in\C$ and $\bm \mu=(\mu_1,\mu_2,\mu_3)$, we let
\bna
\tilde G^{\pm}(s,\bm\mu):=\frac{\pi^{-3s}}{12288\pi^{7/2}}
\left(\prod_{j=1}^3\frac{\Gamma(\frac{s-\mu_j}{2})}{\Gamma(\frac{1-s+\mu_j}{2})}
\pm i\prod_{j=1}^3\frac{\Gamma\left(\frac{1+s-\mu_j}{2}\right)}
{\Gamma\left(\frac{2-s+\mu_j}{2}\right)}\right).
\ena
The integral kernel associated to $w_4$ is defined by
\bna
K_{w_4}(y;\bm\mu)=\int_{-i\infty}^{i\infty}|y|^{-s}\tilde G^\epsilon(s,\bm\mu)\frac{ds}{2\pi i}
\ena
for $y\in \R-\{0 \}$ with $\epsilon=\mathrm{sgn}(y)$.

For $(s_1,s_2)\in\C^2$ and $\bm \mu=(\mu_1,\mu_2,\mu_3)$, we let
\bna
G(s_1,s_2,\bm\mu):=\frac{1}{\Gamma(s_1+s_2)}\prod_{j=1}^3\Gamma(s_1-\mu_j)\Gamma(s_2+\mu_j).
\ena
We also define the following trigonometric functions
\bna
S^{++}(s_1,s_2;\bm\mu)&=&\frac{1}{24\pi^2}\prod_{j=1}^3\cos\left(\frac{3}{2}\pi\nu_j\right),\\
S^{+-}(s_1,s_2;\bm\mu)&=&-\frac{1}{32\pi^2}\frac{\cos(\frac{3}{2}\pi\nu_2)\sin(\pi(s_1-\mu_1))
\sin(\pi(s_2+\mu_2))\sin(\pi(s_2+\mu_3))}
{\sin(\frac{3}{2}\pi\nu_1)\sin(\frac{3}{2}\pi\nu_3)\sin(\pi(s_1+s_2))},\\
S^{-+}(s_1,s_2;\bm\mu)&=&
-\frac{1}{32\pi^2}\frac{\cos(\frac{3}{2}\pi\nu_1)\sin(\pi(s_1-\mu_1))\sin(\pi(s_1-\mu_2))
\sin(\pi(s_2+\mu_3))}
{\sin(\frac{3}{2}\pi\nu_2)\sin(\frac{3}{2}\pi\nu_3)\sin(\pi(s_1+s_2))},\\
S^{--}(s_1,s_2;\bm\mu)&=&\frac{1}{32\pi^2}
\frac{\cos(\frac{3}{2}\pi\nu_3)\sin(\pi(s_1-\mu_2))\sin(\pi(s_2+\mu_2))}
{\sin(\frac{3}{2}\pi\nu_2)\sin(\frac{3}{2}\pi\nu_1)}.
\ena
The integral kernel associated to the longest Weyl's element $w_l$ is defined by
\bna
K_{w_l}^{\epsilon_1,\epsilon_2}(y_1,y_2;\bm\mu)
=\int_{-i\infty}^{i\infty}\int_{-i\infty}^{i\infty}
|4\pi^2y_1|^{-s_1}|4\pi^2y_2|^{-s_2}G(s_1,s_2;\bm\mu)S^{\epsilon_1,\epsilon_2}(s_1,s_2;\bm\mu)\frac{ds_1ds_2}{(2\pi i)^2}
\ena
for $(y_1,y_2)\in (\R-\{0\})^2$ with $\epsilon_i=\mathrm{sgn}(y_i)$.

\subsubsection{The Kuznetsov's trace formula}
Let $n_1,n_2,m_1,m_2\in \mathbb N$ and let $h(\bm \mu)$ be a function
that is holomorphic on
\bna
\Lambda_{1/2+\delta}=\left\{\bm\mu=(\mu_1,\mu_2,\mu_3)\in\C^3,\quad \mu_1+\mu_2+\mu_3=0,\re(\mu_j)\leq\frac{1}{2}+\delta\right\}
\ena
for some $\delta>0$, symmetric under the Weyl group $W$, rapidly decaying as $|\im \mu_j|\rightarrow\infty$
and satisfies
\bna
h(3\nu_1+1,3\nu_2+1,3\nu_3+1)=0.
\ena
Then one has
\bna
\mathcal C+\mathcal E_{min}+\mathcal E_{max}
=\Delta+\Sigma_4+\Sigma_5+\Sigma_l,
\ena
where
\bna
\mathcal C
&=&\sum_{\phi}\frac{h(\bm{\mu}_\phi)}{\mathcal N_\phi}A_\phi(n_1,n_2)\overline{A_\phi(m_1,m_2)},\\
\mathcal E_{max}
&=&
\frac{1}{2\pi i}\sum_{g}\int_{\re(u)=0}
\frac{h(u+it_g,u-it_g,-2u)}{\mathcal N^{\max}_{u,g}}
A^{\max}_{u,g}(n_1,n_2)\overline{A^{\max}_{u,g}(m_1,m_2)}du,\\
\mathcal E_{min}&=&
\frac{1}{24 (2\pi i)^2}
\iint_{\re(\bm\mu)=0}\frac{h(\bm{\mu})}{\mathcal N^{\min}_{\nu_1,\nu_2}}
A^{\min}_{\bm \mu}(n_1,n_2)\overline{A^{\min}_{\bm\mu}(m_1,m_2)}d\bm\mu,
\ena
and
\bna
\Delta&=&\delta_{m_1,n_1}\delta_{m_2,n_2}\frac{1}{192\pi^5}
\iint_{\re(\bm\mu)=0}h(\bm\mu) d_\mathrm {spec}\bm\mu,\\
\Sigma_4&=&\sum_{\epsilon\in\{\pm 1\}}\sum_{D_2\mid D_1\atop{m_2D_1=n_1D_2^2}}
\frac{\tilde S(-\epsilon n_2,m_2,m_1,D_2,D_1)}{D_1D_2}\Phi_{w_4}
\left(\frac{\epsilon m_1m_2n_2}{D_1D_2};h\right),\\
\Sigma_5&=&\sum_{\epsilon\in\{\pm 1\}}\sum_{D_1\mid D_2\atop{m_1D_2=n_2D_1^2}}
\frac{\tilde S(-\epsilon n_1,m_1,m_2,D_1,D_2)}{D_1D_2}\Phi_{w_5}
\left(\frac{\epsilon n_1m_1m_2}{D_1D_2};h\right),\\
\Sigma_l&=&\sum_{\epsilon_1,\epsilon_2\in\{\pm 1\}}\sum_{D_1,D_2}
\frac{S(\epsilon_2 n_2,\epsilon_1n_1;m_1,m_2;D_1,D_2)}{D_1D_2}\Phi_{w_l}
\left(-\frac{\epsilon_2 m_1n_2D_2}{D_1^2},-\frac{\epsilon_1 m_2n_1D_1}{D_2^2};h\right).
\ena
Here
\bna
\Phi_{w_4}(y;h)&=&\iint_{\re(\bm\mu)=0} h(\bm\mu)K_{w_4}(y;\bm\mu)
d_\mathrm{spec}\bm\mu,\\
\Phi_{w_5}(y;h)&=&\iint_{\re(\bm\mu)=0} h(\bm\mu)K_{w_4}(-y;-\bm\mu)d_{\mathrm{spec}}\bm\mu,\\
\Phi_{w_l}(y_1,y_2;h)&=&\iint_{\re(\bm\mu)=0} h(\bm\mu)K_{w_l}^{\mathrm{sgn}(y_1),\mathrm{sgn}(y_2)}
(y_1,y_2;\bm\mu)d_{\mathrm{spec}}\bm\mu.
\ena

\subsection{The choice of the test function}\label{subsec-choice-testfunc}

By unitarity and the Jacquet-Shalika's bounds, the Langlands parameter $\bm\mu_\phi$ of a Hecke-Maass cusp form $\phi$
for $SL_3(\Z)$ is contained in
\bna
\Lambda_{1/2}':=\left\{(\mu_1,\mu_2,\mu_3)\in\C^3,\quad
\begin{aligned}
&| \re(\mu_j)|\leq \frac{1}{2},\quad  \mu_1+\mu_2+\mu_3=0,\\
&\{\mu_1,\mu_2,\mu_3\}=\{\overline\mu_1,\overline\mu_2,\overline\mu_3\}
\end{aligned}
 \right\}.
\ena
 Let $\bm\mu^0=(\mu^0_1,\mu^0_2,\mu^0_3)$ be in generic position in $\Lambda'_{1/2}$, i.e.
\bna
|\mu_{j}^0|\asymp \|\bm\mu^0\|:=T,\quad 1\leq j\leq 3.
\ena
Following \cite{BB2015}(or see \cite{HLZ2017}),
we choose a test function $h(\bm\mu)$ to localizes at a ball of radius $M=T^\theta$ with $0<\theta<1$
about $w(\bm\mu^0)$ for each $w\in W$. It is defined by
\bna
h(\bm\mu):=P(\bm\mu)^2\left(\sum_{w\in W}\psi\left(\frac{w(\bm\mu)-\bm\mu^0}{M}\right)\right)^2,
\ena
where $\psi(\bm\mu)=\exp\left(\mu_1^2+\mu_2^2+\mu_3^2\right)$ and
\bna
P(\bm\mu)=\prod_{0\leq n\leq A_0}
\prod_{j=1}^3\frac{\left(\nu_j-\frac{1}{3}(1+2n)\right)\left(\nu_j+\frac{1}{3}(1+2n)\right)}{|\nu^0_j|^2}
\ena
for some fixed large $A_0>0$. Here
\bna
W&=&\left\{I,w_2=\bma1&\\&&1\\&1\ema, w_3=\bma &1\\1\\&&1\ema,w_4=\bma&1\\&&1\\1\ema,\right.\\
&&\quad\left.
w_5=\bma &&1\\1&\\&1\ema, w_l=\bma&&1\\&1\\1\ema\right\}
\ena
is the Weyl group for $GL_3(\R)$.

\medskip

We need the following two lemmas in \cite{BB2015}, which are used in truncating
summations in geometric terms after the application of  the Kuznetsov's trace formula.
\begin{lemma}\label{lemma-truncation-1}
Let $0<|y|\leq T^{3-\epsilon}$. Then for any constant $A\geq 0$ one has
\bna
\Phi_{w_4}(y;h)\ll_{\epsilon,B}T^{-A}.
\ena
If $|y|>T^{3-\epsilon}$ then
\bna
|y|^j\Phi_{w_4}^{(j)}(y;h)\ll_{j,\epsilon}T^{3}M^2(T+|y|^{1/3})^j
\ena
for any $j\in\mathbb N_0$.
\end{lemma}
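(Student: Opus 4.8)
The plan is to realise $\Phi_{w_4}(y;h)$ as an explicit oscillatory integral and then separate the two ranges of $|y|$ by non‑stationary, respectively stationary, phase. Inserting the definition of $K_{w_4}$ and interchanging the order of integration,
\[
\Phi_{w_4}(y;h)=\frac{1}{2\pi i}\int_{(\sigma)}|y|^{-s}\left(\iint_{\re(\bm\mu)=0}h(\bm\mu)\,\tilde G^{\,\mathrm{sgn}(y)}(s,\bm\mu)\,d_{\mathrm{spec}}\bm\mu\right)ds,
\]
where the $s$‑contour is first taken on a line $\re(s)=\sigma$ that does not meet the poles $s=\mu_j$ of the first product of $\tilde G^{\pm}$. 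Applying Stirling's formula to each $\Gamma$‑quotient in $\tilde G^{\pm}$ and to $\mathrm{spec}(\bm\mu)=\prod_{j}3\nu_j\tan(\tfrac{3\pi}{2}\nu_j)$, the integrand restricted to $\mathrm{supp}(h)$ takes the shape (amplitude)$\,\cdot\,e^{\mathrm{i}\Psi}$ with amplitude $\asymp T^{3}\prod_{j}(1+|s-\mu_j|)^{-1/2}$ — the $T^3$ coming from $\mathrm{spec}$, since $\|\bm\mu\|\asymp T$ there — and, writing $s=\sigma+\mathrm{i}t$ and $\mu_j=\mathrm{i}m_j$ with $m_j\in\R$, the leading phase
\[
\Psi=-t\log(\pi^3|y|)+\sum_{j=1}^{3}\Big((t-m_j)\log\big|\tfrac{t-m_j}{2}\big|-(t-m_j)\Big),
\]
so that, using $\mu_3=-\mu_1-\mu_2$,
\[
\partial_t\Psi=\log\frac{|t-m_1|\,|t-m_2|\,|t-m_3|}{8\pi^3|y|},\qquad
\partial_{m_1}\Psi=\log\Big|\frac{t-m_3}{t-m_1}\Big|,\qquad
\partial_{m_2}\Psi=\log\Big|\frac{t-m_3}{t-m_2}\Big|.
\]

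For the first assertion, $0<|y|\le T^{3-\epsilon}$, I would show the phase has no critical point on the effective support and integrate by parts repeatedly. On $\mathrm{supp}(h)$ one has $|m_j|\asymp T$ and $m_1+m_2+m_3=0$, so the three $m_j$ cannot be simultaneously close; hence wherever $\partial_t\Psi$ is small, i.e.\ where $\prod_j|t-m_j|$ is within $\exp(\tfrac{\epsilon}{10}\log T)$ of $8\pi^3|y|\le 8\pi^3T^{3-\epsilon}$, some factor $|t-m_{j_0}|$ must be $\le T^{1-\epsilon/4}$ with the other two $\asymp T$ (so $|t|\asymp T$), and then one of $\partial_{m_1}\Psi$, $\partial_{m_2}\Psi$ is $\gg\epsilon\log T$ in absolute value. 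Partitioning the $(t,\bm m)$‑domain according to which directional derivative is large and integrating by parts in that direction — while keeping track of the higher derivatives of $\Psi$ and of the amplitude (the latter varying on scale $\gg M^{-1}$ in $\bm\mu$ and $\gg T^{-1}$ in $t$) — yields $\Phi_{w_4}(y;h)\ll_{\epsilon,A}T^{-A}$ for every $A\ge 0$.

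For the second assertion, $|y|>T^{3-\epsilon}$, the would‑be critical point of the $s$‑integral lies at $|t|\asymp T+|y|^{1/3}$, and for $|t|$ a large multiple of $T+|y|^{1/3}$ one has $|\partial_t\Psi|\gg\log(|t|^3/|y|)\gg 1$, so one integration by parts in $t$ reduces to the range $|t|\ll T+|y|^{1/3}$ up to $O_A(T^{-A})$. On this range, $|y|^j\,\mathrm{d}^j/\mathrm{d}y^j$ applied to $|y|^{-s}$ produces the factor $(-s)(-s-1)\cdots(-s-j+1)=O\big((T+|y|^{1/3})^j\big)$, and bounding the rest trivially gives
\[
|y|^j\,\Phi_{w_4}^{(j)}(y;h)\ll (T+|y|^{1/3})^j\iint_{\re(\bm\mu)=0}|h(\bm\mu)|\,\mathrm{spec}(\bm\mu)\Big(\int_{|t|\ll T+|y|^{1/3}}\big|\tilde G^{\,\mathrm{sgn}(y)}(\sigma+\mathrm{i}t,\bm\mu)\big|\,dt\Big)d\bm\mu.
\]
Since $\mathrm{spec}(\bm\mu)\asymp T^3$ on $\mathrm{supp}(h)$, the support has measure $O(M^2)$, and $\int_{\R}|\tilde G^{\pm}(\sigma+\mathrm{i}t,\bm\mu)|\,dt\ll 1$ (by Hölder, since $|\tilde G^{\pm}|\asymp\prod_j(1+|s-\mu_j|)^{-1/2}$ and each $(1+|s-\mu_j|)^{-3/2}$ is integrable in $t$), this yields $|y|^j\Phi_{w_4}^{(j)}(y;h)\ll T^3M^2(T+|y|^{1/3})^j$, in fact with a power of $T$ to spare.

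The real obstacle is the first assertion near the thin regions $t\approx m_j$, where the $s$‑contour is forced close to a pole of $\tilde G^{\pm}$: there Stirling degenerates for that one $\Gamma$‑quotient, $\partial_{m_j}^2\Psi$ is no longer negligible, and a naive integration by parts in a single variable gains only a logarithmic, not a polynomial, factor per step. The way around this — which is the technical core of \cite{BB2015} and which I would import verbatim — is to deform the $s$‑contour (at a scale $\asymp T^{\epsilon}$, curving around the $\mu_j$) so that $|s-\mu_j|$ stays suitably bounded below for every $j$, to absorb the rough $\Gamma$‑quotient into the amplitude, and to estimate the contribution of the detours together with any residues crossed by a separate non‑stationary phase argument in $\bm\mu$; alternatively one may invoke the known rapid decay of the $GL_3$‑Bessel kernel $K_{w_4}(y;\bm\mu)$ for $|y|\ll\|\bm\mu\|^{3-\epsilon}$ and its controlled oscillatory behaviour for larger $|y|$, after which only the routine integration against $h$ remains.
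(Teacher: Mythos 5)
The paper does not prove this lemma at all: it is quoted verbatim from Blomer--Buttcane \cite{BB2015}, so there is no in-paper argument to compare yours against. Judged on its own terms, your reconstruction gets the framework right: the Mellin--Barnes representation of $K_{w_4}$, the Stirling phase $\Psi$ with $\partial_t\Psi=\log\bigl(\prod_j|t-m_j|/(8\pi^3|y|)\bigr)$ and $\partial_{m_i}\Psi=\log|(t-m_3)/(t-m_i)|$ are all correct, and your treatment of the range $|y|>T^{3-\epsilon}$ (truncate to $|t|\ll T+|y|^{1/3}$ by non-stationary phase in $t$, then bound trivially using $\iint|h|\,d_{\mathrm{spec}}\bm\mu\ll T^3M^2$ and $\int_{\R}|\tilde G^{\pm}(\sigma+it,\bm\mu)|\,dt\ll 1$) amounts to a complete proof of the second assertion.

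The first assertion, however, is not actually proved. You correctly locate the only obstruction --- the region where $t$ lies within $O(T^{o(1)})$ of some $m_{j_0}$, where the $s$-contour sits next to the pole of $\Gamma(\frac{s-\mu_{j_0}}{2})$, the amplitude's logarithmic derivative in the chosen integration direction is $\asymp|t-m_{j_0}|^{-1}$ rather than $O(M^{-1}+T^{-1})$, and the available phase derivative is only $\asymp\log T$, so each integration by parts gains $O(1/\log T)$ and no finite number of steps yields $T^{-A}$ --- but you then dispose of it by proposing to ``import verbatim'' the contour deformation of \cite{BB2015} or to ``invoke the known rapid decay of the $GL_3$ Bessel kernel,'' which is precisely the statement being proved. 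As written the argument is therefore an honest, well-aimed sketch that is circular at its hardest point. Two smaller gaps: your case analysis needs the separation $|m_i-m_j|\gg T$ for $i\ne j$ on $\mathrm{supp}(h)$, which is what ``generic position'' must be taken to mean (the stated condition $|\mu_j^0|\asymp T$ alone does not imply it); and the $s$-contour must be fixed (curved around, or shifted off, $\re s=0$) before the modulus estimate $\prod_j(1+|s-\mu_j|)^{-1/2}$ and the interchanges of integration are legitimate.
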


\begin{lemma}\label{lemma-truncation-2}
Let $\mathcal Y:=\min\{|y_1|^{1/3}|y_2|^{1/6},|y_1|^{1/6}|y_2|^{1/3}\}$. If $\mathcal Y\leq T^{1-\epsilon}$,
then
\bna
\Phi_{w_l}(y_1,y_2;h)\ll_{B,\epsilon} T^{-A}
\ena
for any fixed constant $A\geq 0$. If $\mathcal Y\gg T^{1-\epsilon}$, then
\bna
&&|y_1|^{j_1}|y_2|^{j_2}\frac{\partial^{j_1}}{\partial y_1^{j_1}}\frac{\partial^{j_2}}{\partial y_1^{j_2}}
\Phi_{w_l}(y_1,y_2)\\
&\ll_{j_1,j_2,\epsilon}&T^{3}M^2(T+|y_1|^{1/2}+|y_1|^{1/3}|y_2|^{1/6})^{j_1}
(T+|y_2|^{1/2}+|y_2|^{1/3}|y_1|^{1/6})^{j_2}
\ena
for all $j_1,j_2\in\mathbb N_0$.
\end{lemma}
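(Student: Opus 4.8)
\noindent\emph{Proof plan.}\quad The plan is to analyze the inner $(s_1,s_2)$-integral defining $K_{w_l}^{\epsilon_1,\epsilon_2}$ first and then carry out the $\bm\mu$-average. Substituting the definition of $K_{w_l}^{\epsilon_1,\epsilon_2}$ into $\Phi_{w_l}$ and interchanging the order of integration gives
\bna
\Phi_{w_l}(y_1,y_2;h)&=&\iint_{\re(s_1)=\sigma_1\atop\re(s_2)=\sigma_2}|4\pi^2y_1|^{-s_1}|4\pi^2y_2|^{-s_2}\\
&&\times\left(\iint_{\re(\bm\mu)=0}h(\bm\mu)G(s_1,s_2;\bm\mu)S^{\epsilon_1,\epsilon_2}(s_1,s_2;\bm\mu)\,d_{\mathrm{spec}}\bm\mu\right)\frac{ds_1\,ds_2}{(2\pi i)^2},
\ena
where $\epsilon_i=\mathrm{sgn}(y_i)$. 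Writing $s_i=\sigma_i+it_i$ and $\mu_j=i\tau_j$, with $\tau_j$ confined by the support of $h$ to an interval of length $\asymp M$ about $\im w(\mu_j^0)\asymp T$, I would apply Stirling's formula to every $\Gamma$-factor of $G$ and to every $\sin$ and $\cos$ appearing in $S^{\epsilon_1,\epsilon_2}$.

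The decisive step is to control the \emph{net exponential type} of $G\cdot S^{\epsilon_1,\epsilon_2}$. The three groups $\prod_j\Gamma(s_1-\mu_j)$, $\prod_j\Gamma(s_2+\mu_j)$ and $1/\Gamma(s_1+s_2)$ contribute exponential factors $\exp\bigl(-\tfrac{\pi}{2}\sum_j|t_1-\tau_j|\bigr)$, $\exp\bigl(-\tfrac{\pi}{2}\sum_j|t_2+\tau_j|\bigr)$ and $\exp\bigl(\tfrac{\pi}{2}|t_1+t_2|\bigr)$, while each $\sin$ or $\cos$ in the numerator of $S^{\epsilon_1,\epsilon_2}$ grows like $\exp(\pi|\cdot|)$ and each in the denominator decays correspondingly. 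I would verify, separately for each sign pair $(\epsilon_1,\epsilon_2)$ and each ordering of the $\tau_j$, that these factors cancel exactly on a bounded \emph{bulk} region $R=R(\bm\mu)$ in the $(t_1,t_2)$-plane and leave genuine exponential decay outside $R$. This confines the $(s_1,s_2)$-integral to $R$ up to an error $O(T^{-A})$ and reduces the integrand there to an amplitude of polynomial size times a pure phase $e^{i\Psi}$, where $\Psi$ collects the imaginary parts of $-s_i\log|4\pi^2y_i|$ together with the Stirling phases of the $\Gamma$- and $\sin$-factors.

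I would then compute the gradient and Hessian of $\Psi$ in $(t_1,t_2)$ and locate the saddle of $K_{w_l}^{\epsilon_1,\epsilon_2}$. The saddle is real (hence reachable inside $R$) precisely when $\mathcal Y\gg T^{1-\epsilon}$, and there the stationary values satisfy $|s_1|\asymp T+|y_1|^{1/2}+|y_1|^{1/3}|y_2|^{1/6}$ and $|s_2|\asymp T+|y_2|^{1/2}+|y_2|^{1/3}|y_1|^{1/6}$; the fractional exponents $1/3$ and $1/6$ reflect the rank-three structure (three $\Gamma$-factors in each of $s_1,s_2$) together with the coupling through $\Gamma(s_1+s_2)$. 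When $\mathcal Y\le T^{1-\epsilon}$ the relevant saddle becomes complex, so the kernel $K_{w_l}^{\epsilon_1,\epsilon_2}(y_1,y_2;\bm\mu)$ is super-polynomially small in $T$, uniformly over the $\bm\mu$-support — exactly as the $J$-Bessel function $J_\nu(x)$ decays faster than any power of $\nu$ when $x\ll\nu$. Integrating this small kernel against the bounded weight $h$ over a region of volume $\asymp M^2$ then gives $\Phi_{w_l}\ll_{B,\epsilon}T^{-A}$ for every $A$, which is the first assertion. When $\mathcal Y\gg T^{1-\epsilon}$, stationary phase in $(s_1,s_2)$ leaves a bounded amplitude, while the $\bm\mu$-integration supplies the factor $\asymp T^3M^2$ coming from $\mathrm{spec}(\bm\mu)\asymp T^3$ and the $M$-ball of volume $\asymp M^2$; this yields the $j_1=j_2=0$ bound $\Phi_{w_l}\ll T^3M^2$.

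Finally, the derivative bounds follow by differentiating under the integral sign. Since each $\partial/\partial y_i$ acts on $|4\pi^2y_i|^{-s_i}=\exp(-s_i\log(4\pi^2y_i))$ by bringing down $-s_i/y_i$, the weighted derivative $|y_i|^{j_i}\partial^{j_i}_{y_i}$ inserts a factor of size $O(|s_i|^{j_i})$. Re-running the saddle-point estimate with these polynomial weights and using the effective sizes $|s_1|\ll T+|y_1|^{1/2}+|y_1|^{1/3}|y_2|^{1/6}$ and $|s_2|\ll T+|y_2|^{1/2}+|y_2|^{1/3}|y_1|^{1/6}$ on the support of the oscillatory integrand produces exactly the claimed factors $(T+|y_1|^{1/2}+|y_1|^{1/3}|y_2|^{1/6})^{j_1}$ and $(T+|y_2|^{1/2}+|y_2|^{1/3}|y_1|^{1/6})^{j_2}$. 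The main obstacle is the exponential-cancellation bookkeeping of the second step, together with the (partly degenerate) saddle-point analysis that pins down the conductor threshold $\mathcal Y\asymp T$ and the precise exponents $1/3,1/6$; once the bulk region $R$ and the phase $\Psi$ are correctly identified, the two cases and the derivative bounds follow from routine, if lengthy, stationary-phase estimates of the type carried out in \cite{BB2015}.
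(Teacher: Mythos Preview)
The paper does not actually prove this lemma: it states Lemmas~\ref{lemma-truncation-1} and~\ref{lemma-truncation-2} together with the sentence ``We need the following two lemmas in \cite{BB2015}'' and gives no argument of its own. Your sketch is therefore not competing with anything in the paper; it is a plausible outline of the proof that appears in \cite{BB2015} (see Lemmas~3 and~4 there and the surrounding analysis), based on Stirling asymptotics for $G(s_1,s_2;\bm\mu)S^{\epsilon_1,\epsilon_2}$, identification of the exponentially non-decaying region in $(t_1,t_2)$, and contour shifting/stationary phase to detect the threshold $\mathcal Y\asymp T$, with derivative bounds obtained by differentiating $|4\pi^2 y_i|^{-s_i}$ under the integral. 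So as far as comparison with the paper goes: the paper simply quotes the result, and your plan is in line with the cited source.

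One caution if you intend to actually carry this out rather than cite it: the exponential bookkeeping for the four sign cases $S^{\pm\pm}$ and the six Weyl orderings of $(\tau_1,\tau_2,\tau_3)$ is genuinely delicate, and in \cite{BB2015} the authors do not rely purely on a single saddle-point computation but combine contour shifts in $(s_1,s_2)$ to large real part (to get power savings in $|y_i|$) with the Weyl-group symmetry of $h$. Your description of a ``complex saddle'' when $\mathcal Y\le T^{1-\epsilon}$ is morally right but would need to be replaced by an explicit contour shift to make the decay rigorous. None of this is a gap in your plan so much as a warning that the ``routine, if lengthy'' qualifier hides several pages of case analysis.
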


\subsection{Rankin-Selberg $L$-functions}
We recall holomorphic Hecke cusp forms in \cite{Iw1997}.
Let $f$ be a normalized holomorphic Hecke cusp form of weight $k$ for $SL_2(\Z)$
such that $f$ has the Fourier expansion
\bna
f(z)=\sum_{m\geq 1}\lambda_f(m)m^{\frac{k-1}{2}}e(mz),
\ena
where  $\lambda_f(m)$ are
Hecke eigenvalues of the Hecke operators $T(m)$.
The $L$-function associated to $f$
is
\bna
L(s,f)=\sum_{m\geq 1}\frac{\lambda_f(m)}{m^s}
\ena
which is absolutely convergent for $\re(s)>1$ by the
Ramanujan-Deligne's bound $\lambda_f(m)\ll m^\epsilon$.
It has analytic continuation for all $s\in\C$ and satisfies the functional equation
\bna
\Lambda(s,f):=\Gamma_\R\left(s+\frac{k-1}{2}\right)\Gamma_\R\left(s+\frac{k+1}{2}\right)L(s,f)
=i^k\Lambda(1-s,f).
\ena

Let $f$ be as above and $\phi$ be a Hecke-Maass cusp form for $SL_3(\Z)$
with Langlands parameter $\bm\mu_\phi=(\mu_1,\mu_2,\mu_3)$.
The Rankin-Selberg $L$-function $L(s,f\otimes \phi)$ is defined by (see Section 12.2 in \cite{Go2006})
\bna
L(s,f\otimes \phi):=\sum_{m_1\geq 1}\sum_{m_2\geq 1}\frac{\lambda_f(m_2)\overline{A_\phi(m_1,m_2)}}{(m_1^2m_2)^s}
\ena
for $\re(s)$ sufficient large. It has analytic continuation for
all $s\in\C$ and satisfies the functional equation
\bna
\begin{aligned}
\Lambda(s,f\otimes\phi)&=\prod_{i=1}^3\Gamma_\R\left(s+\frac{k-1}{2}-\mu_i\right)
\Gamma_\R\left(s+\frac{k+1}{2}-\mu_i\right)
L(s,f\otimes\phi)\\
&=(i^{k})^3\Lambda(1-s,f\otimes\phi^\vee),
\end{aligned}
\ena
where $\phi^\vee$ is the dual Maass cusp form associated to $\phi$.

Let $E^{\min}_{\nu_1,\nu_2}(z)$ be the minimal Eisenstein series with
the Langlands parameter $\bm\mu(E^{\min}_{\nu_1,\nu_2})$.
By Euler products of $L(s,f)$ and $L(s,E^{\min}_{\nu_1,\nu_2})$,
we have
\bna
\begin{aligned}
L(s,f\otimes E^{\min}_{\nu_1,\nu_2}):&
=\sum_{m_1\geq 1}\sum_{m_2\geq 1}\frac{\lambda_f(m_2)\overline{A^{\min}_{\nu_1,\nu_2}(m_1,m_2)}}{(m_1^2m_2)^s}\\
&=L(s-\mu_1,f)L(s-\mu_2,f)L(s-\mu_3,f).
\end{aligned}
\ena
It satisfies the functional equation
\bna
\begin{aligned}
\Lambda(s,f\otimes E^{\min}_{\nu_1,\nu_2}):&=\prod_{j=1}^3\Gamma_\R\left(s+\frac{k-1}{2}-\mu_j\right)
\Gamma_\R\left(s+\frac{k+1}{2}-\mu_j\right)L(s,f\times E^{\min}_{\nu_1,\nu_2})\\
&=\Lambda(1-s,f\otimes E^{\min}_{-\nu_1,-\nu_2}).
\end{aligned}
\ena

For $E^{\max}_{u,g}(z)$ the maximal Eisenstein series with
 $\bm\mu(E^{\max}_{u,g})=(\mu_1',\mu_2',\mu_3')$
where $\mu_j'$ are given by (\ref{Langlands-parameter-Maximal-Eisenstein}),
we have
\bna
\begin{aligned}
L(s,f\otimes E^{\max}_{u,g}):&=
\sum_{m_1\geq 1}\sum_{m_2\geq 1}\frac{\lambda_f(m_2)\overline{A^{\max}_{\nu,u}(m_1,m_2)}}{(m_1^2m_2)^s}\\
&=L(s+2u,f)L(s-u,f\otimes g),
\end{aligned}
\ena
where
$L(s,f\otimes g)$
is the Rankin-Selberg function associated to $f$ and $g$.
The complete $L$-function is
\bna
\begin{aligned}
\Lambda(s,f\otimes E^{\max}_{u,g})&=\prod_{j=1}^3\Gamma_\R\left(s+\frac{k-1}{2}-\mu_j'\right)
\left(s+\frac{k+1}{2}-\mu_j'\right)
L(s,f\otimes E^{\max}_{u,g})\\
&=i^k\Lambda(1-s,f\otimes E^{\max}_{-u,g}).
\end{aligned}
\ena

\subsection{The approximate functional equation}
For the Rankin-Selberg $L$-function defined in the previous section,
we have the following
approximate functional equation (see Theorem 5.3 in \cite{IK2004}).
\begin{lemma}\label{lemma-approximate-func}Let $G(s)=e^{s^2}$. We have
\bna
L\left(\frac{1}{2},f\otimes\phi\right)&=&\sum_{m_1\geq 1}\sum_{m_2\geq 1}
\frac{\lambda_f(m_2)A_\phi(m_2,m_1)}{(m_1^2m_2)^{1/2}}
V_k(m_1^2m_2,\bm{\mu}_\phi)\\
&&+i^{k}\sum_{m_1\geq 1}\sum_{m_2\geq 1}
\frac{\lambda_f(m_1)A_\phi(m_1,m_2)}{(m_1^2m_2)^{1/2}}
\tilde V(m_1^2m_2;k,\bm{\mu}_\phi),
\ena
where
\bea
V_k(y,\bm{\mu})=\frac{1}{2\pi i}\int_{(3)}y^{-s}
\prod_{i=1}^3\frac{\Gamma_\R\left(s+\frac{1}{2}+\frac{k-1}{2}-\mu_i\right)
\Gamma_\R\left(s+\frac{1}{2}+\frac{k+1}{2}-\mu_i\right)}
{\Gamma_\R\left(\frac{1}{2}+\frac{k-1}{2}-\mu_i\right)
\Gamma_\R\left(\frac{1}{2}+\frac{k+1}{2}-\mu_i\right)}G(s)\frac{ds}{s}
\label{V-y-k-mu}
\eea
and
\bna
\tilde V_k(y,\bm{\mu})=\frac{1}{2\pi i}\int_{(3)}y^{-s}
\prod_{i=1}^3\frac{\Gamma_\R\left(s+\frac{1}{2}+\frac{k-1}{2}+\mu_i\right)
\Gamma_\R\left(s+\frac{1}{2}+\frac{k+1}{2}+\mu_i\right)}
{\Gamma_\R\left(\frac{1}{2}+\frac{k-1}{2}-\mu_i\right)
\Gamma_\R\left(\frac{1}{2}+\frac{k+1}{2}-\mu_i\right)}G(s)\frac{ds}{s}.
\ena
\end{lemma}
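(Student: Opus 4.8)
The plan is to obtain this from the functional equation of the completed Rankin--Selberg $L$-function by the usual contour-shift argument, exactly in the style of Theorem 5.3 of \cite{IK2004}. Write
\[
\gamma_\infty(w):=\prod_{j=1}^{3}\Gamma_\R\left(w+\frac{k-1}{2}-\mu_j\right)\Gamma_\R\left(w+\frac{k+1}{2}-\mu_j\right)
\]
for the archimedean factor of $\Lambda(s,f\otimes\phi)$, and let $\gamma_\infty^{\vee}(w)$ be the same product with every $\mu_j$ replaced by $-\mu_j$, which is the archimedean factor of $\Lambda(s,f\otimes\phi^\vee)$; then, straight from the definitions, $V_k(y,\bm\mu_\phi)=\frac{1}{2\pi i}\int_{(3)}y^{-s}\frac{\gamma_\infty(1/2+s)}{\gamma_\infty(1/2)}G(s)\frac{ds}{s}$ and $\tilde V_k(y,\bm\mu_\phi)=\frac{1}{2\pi i}\int_{(3)}y^{-s}\frac{\gamma_\infty^{\vee}(1/2+s)}{\gamma_\infty(1/2)}G(s)\frac{ds}{s}$. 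I would start from the integral
\[
\mathcal I:=\frac{1}{2\pi i}\int_{(3)}\frac{\Lambda(1/2+s,\,f\otimes\phi)}{\gamma_\infty(1/2)}\,G(s)\,\frac{ds}{s}
\]
and compute it in two ways.

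First, on the line $\re(s)=3$ one has $\re(1/2+s)=7/2$, well inside the half-plane of absolute convergence of the Dirichlet series of $L(s,f\otimes\phi)$ (using $A_\phi(m_1,m_2)\ll m_1m_2$ and $\lambda_f(m)\ll m^{\epsilon}$), and $G(s)=e^{s^{2}}$ decays rapidly on vertical lines; so I would expand the series, interchange summation and integration, and use $\overline{A_\phi(m_1,m_2)}=A_\phi(m_2,m_1)$ to recognise $\mathcal I$ as the first sum in the statement. Second, I would move the contour to the left. Here $L(s,f\otimes\phi)$ is entire --- since $f$ and $\phi$, living on $GL_2$ and $GL_3$, cannot be contragredient, there is no Rankin--Selberg pole --- and the poles of $\gamma_\infty(1/2+s)$ lie in $\re(s)\leq\frac{1-k}{2}<0$; together with the polynomial growth of $\Lambda(s,f\otimes\phi)$ in vertical strips (the standard convexity bound), this lets the contour be pushed to $\re(s)=-\epsilon_0$ for a small fixed $\epsilon_0>0$, the horizontal segments going to zero, picking up only the simple pole of $G(s)/s$ at $s=0$, whose residue is $\Lambda(1/2,f\otimes\phi)/\gamma_\infty(1/2)=L(1/2,f\otimes\phi)$ (recall $G(0)=1$). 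Calling $\mathcal I'$ the integral over $\re(s)=-\epsilon_0$, one has $\mathcal I=L(1/2,f\otimes\phi)+\mathcal I'$. In $\mathcal I'$ I would apply the functional equation $\Lambda(1/2+s,f\otimes\phi)=i^{3k}\Lambda(1/2-s,f\otimes\phi^\vee)$ (with $i^{3k}=i^{k}=1$ for $k\equiv0\bmod4$), substitute $s\mapsto-s$ --- which leaves $ds/s$ and $G$ unchanged and carries the contour to $\re(s)=\epsilon_0$ --- and then push the contour back out to $\re(s)=3$, crossing no pole because $L(s,f\otimes\phi^\vee)$ is entire and $\gamma_\infty^{\vee}(1/2+s)$ is holomorphic for $\re(s)\geq0$. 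Expanding the now absolutely convergent Dirichlet series of $L(1/2+s,f\otimes\phi^\vee)$ --- whose coefficients are the complex conjugates of those of $L(s,f\otimes\phi)$ --- and interchanging once more shows that $-\mathcal I'$ is exactly the second term on the right-hand side of the asserted identity (the one carrying the factor $i^{k}$). Comparing the two evaluations of $\mathcal I$ then gives $L(1/2,f\otimes\phi)=\mathcal I-\mathcal I'$, which is the claim.

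The one step that is not pure bookkeeping is the leftward contour shift; it rests on two facts about $L(s,f\otimes\phi)$, namely that it is entire (a consequence of the cuspidality of both factors) and that it has moderate growth in vertical strips, and these are what one must be willing to quote. A smaller technical point is that the first shift should be stopped at $\re(s)=-\epsilon_0$ rather than carried straight to $\re(s)=-3$, since for small weights $k$ the poles of $\gamma_\infty$ can fall inside the strip $|\re(s)|\leq 3$; the line $\re(s)=3$ occurring in $\tilde V_k$ is reached only after the reflection $s\mapsto-s$, where the relevant factor $\gamma_\infty^{\vee}$ is holomorphic in the right half-plane, so no spurious residues arise. Everything else --- the interchanges of sum and integral, the absolute convergence on $\re(s)=3$, and the identification of the Dirichlet coefficients --- is routine.
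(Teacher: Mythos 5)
Your argument is correct and is precisely the standard contour-shift proof of Theorem 5.3 in \cite{IK2004}, which is all the paper itself offers for this lemma (it is proved by citation only), so there is nothing to compare beyond noting that you have carried out the cited argument in full: the two evaluations of $\mathcal I$, the residue at $s=0$, the use of the functional equation $\Lambda(1/2+s,f\otimes\phi)=i^{3k}\Lambda(1/2-s,f\otimes\phi^{\vee})$ with $i^{3k}=i^{k}$, and the entirety of the Rankin--Selberg $L$-function are exactly the ingredients intended. One small remark: your expansion of $-\mathcal I'$ produces $\lambda_f(m_2)A_\phi(m_1,m_2)$ in the second sum (since $\overline{A_{\phi^{\vee}}(m_1,m_2)}=A_\phi(m_1,m_2)$), which is the form the paper actually uses later in $\mathcal A_2$; the $\lambda_f(m_1)$ appearing in the printed statement of the lemma is a typo, so your assertion that you recover ``exactly the second term'' should be understood modulo that correction.
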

The functions $V_k(y,\bm\mu)$ and $\tilde V_k(y,\bm\mu)$ have the following properties,
which can be proved by the method in Proposition 5.4 in \cite{IK2004}.
\begin{lemma}\label{lemma-proper-V-k-mu}
Assume that $\bm\mu=(\mu_1,\mu_2,\mu_3)$ with $\mu_i\asymp T$.
One has
\bea
\nonumber y^a\frac{\partial^{a}}{\partial y^a}V_k(y,\bm\mu)\ll_k \left(\frac{y}{T^3}\right)^{-A},\quad
y^a\frac{\partial^{a}}{\partial y^a}\tilde V_k(y,\bm\mu)\ll_k \left(\frac{y}{T^3}\right)^{-A}
\eea
for any large number $A>0$ and any $a\in\mathbb N_0$.
Moreover, for $y\gg T^{3}$,
\bna
V_k(y,\bm\mu)
&=&1+O_{B,k}\left(\frac{T^3}{y}\right)^{-B}\\
\tilde V_k(y,\bm\mu)
&=&\prod_{i=1}^3\frac{\Gamma(\frac{k}{2}+\mu_i)}
{\Gamma(\frac{k}{2}-\mu_i)}+O_{B,k}\left(\frac{T^3}{y}\right)^{-B}
\ena
for any $0<B<\frac{k-1}{2}$.
\end{lemma}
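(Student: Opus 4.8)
The plan is to prove the stated properties by moving the contour in the Mellin integral (\ref{V-y-k-mu}) (and in its analogue for $\tilde V_k$), controlling the quotient of gamma factors with Stirling's formula uniformly in $\bm\mu$ and exploiting the super-exponential decay of $G(s)=e^{s^2}$ on vertical lines, both to justify the shifts and to absorb polynomial factors. I treat $V_k$; the case of $\tilde V_k$ is identical apart from the value of the residue.

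First I would establish the analytic input. Since the parameters occurring here satisfy $|\re(\mu_i)|\le\frac12$ and $|\mu_i|\asymp T$, one has $|\im(\mu_i)|\asymp T$; writing $\Gamma_\R(z)=\pi^{-z/2}\Gamma(z/2)$ and applying Stirling to each of the six gamma-quotients in (\ref{V-y-k-mu}), I get that on a fixed vertical line, and for $|\im(s)|$ at most a small fixed power of $T$, the full product is $\ll_k T^{3\re(s)}e^{O(|\im(s)|)}$, with a lower bound of the same order of magnitude when $\re(s)=O(1)$; here $T^{3\re(s)}$ reflects the conductor, the six archimedean factors of size $\asymp T$ each contributing $T^{\re(s)/2}$. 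The range of $|\im(s)|$ larger than this power of $T$ is negligible since $|G(\sigma+it)|=e^{\sigma^2-t^2}$ crushes the (at worst mildly exponential in $|t|$) gamma quotient. I would also record that the only pole of the integrand in the half-plane $\re(s)>-\frac{k-1}{2}$ is the simple pole of $1/s$ at $s=0$: the poles of the numerator gamma factors come from $\Gamma\big((s+\frac k2-\mu_i)/2\big)$ at $s=-\frac k2+\mu_i-2n$ with $n\ge 0$, all lying in $\re(s)\le-\frac{k-1}{2}$ because $|\re(\mu_i)|\le\frac12$.

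For the first display I would push the contour to the right to $\re(s)=A$ for an arbitrarily large fixed $A$: no pole is crossed, the horizontal tails vanish by the decay of $G$, and the Stirling bound gives $V_k(y,\bm\mu)\ll_{k,A}(y/T^3)^{-A}$, which is a genuine gain once $y\gg T^3$. Since $y^a\frac{\partial^a}{\partial y^a}$ turns $y^{-s}$ into $(-s)(-s-1)\cdots(-s-a+1)\,y^{-s}$, a polynomial of degree $a$ in $s$ that is harmlessly absorbed by $G(s)$, the same shift yields the corresponding bound for $y^a\frac{\partial^a}{\partial y^a}V_k(y,\bm\mu)$.

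For the remaining statements I would push the contour to the left to $\re(s)=-B$ with $0<B<\frac{k-1}{2}$, which by the previous paragraph stays strictly to the right of all poles of the gamma factors, so only the pole at $s=0$ is crossed. Its residue is the value at $s=0$ of $y^{-s}\prod_i(\cdots)G(s)$, which equals $1$ because there the numerator and denominator gamma factors coincide; for $\tilde V_k$ the analogous residue is $\prod_{i=1}^3\frac{\Gamma_\R(\frac k2+\mu_i)\Gamma_\R(\frac k2+1+\mu_i)}{\Gamma_\R(\frac k2-\mu_i)\Gamma_\R(\frac k2+1-\mu_i)}$, which collapses to $\prod_{i=1}^3\frac{\Gamma(\frac k2+\mu_i)}{\Gamma(\frac k2-\mu_i)}$ by the Legendre duplication formula, the surviving powers of $2\pi$ cancelling because $\mu_1+\mu_2+\mu_3=0$. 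The leftover integral on $\re(s)=-B$ is $\ll_{k,B} y^{B}T^{-3B}=(y/T^3)^{B}$ by the Stirling estimate, which gives the stated expansions of $V_k$ and $\tilde V_k$. The one step requiring genuine care is the uniformity of Stirling's formula with shifts $\mu_i$ of size $\asymp T$: one must keep $|\im(s)|$ below a small power of $T$ so that the secondary term $O(|s|^2/T)$ in $\log\Gamma$ is $o(1)$ and the relevant arguments stay at distance $\asymp T$ from the negative reals, then dispatch the tail with $G$; everything else is the standard mechanism of Proposition 5.4 of \cite{IK2004}, and all implied constants depend only on $k$ (and on $a$, $A$, $B$).
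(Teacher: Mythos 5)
Your proposal is correct and follows exactly the route the paper itself indicates: the paper gives no written proof but refers to the method of Proposition 5.4 of \cite{IK2004}, which is precisely your contour-shift argument (right shift for the decay bounds, left shift past the simple pole at $s=0$ for the asymptotics, Stirling for the gamma quotients, and the Gaussian decay of $G(s)=e^{s^2}$ to control everything). Your residue computations, including the duplication-formula collapse of the $\Gamma_\R$ ratio for $\tilde V_k$ using $\mu_1+\mu_2+\mu_3=0$, and your location of the gamma poles at $\re(s)\le-\frac{k-1}{2}$ justifying the restriction $0<B<\frac{k-1}{2}$, are all accurate.
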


\section{Proof of Theorem \ref{thm-main}}
Let $k\equiv 0\bmod 4$. For $h(\bm\mu)$ defined in section \ref{subsec-choice-testfunc},
we consider
\bna
\mathcal A=\sum_{\phi}\frac{h(\bm \mu_\phi)}{\mathcal N_\phi}
A_\phi(p,p)L(1/2,f\otimes\phi),
\ena
where  $\phi$ runs over a  Hecke-Maass basis of the space of
Maass cusp forms for $SL_3(\Z)$.
By the approximate functional equation in Lemma \ref{lemma-approximate-func}, one has
\bna
\mathcal A=\mathcal A_1+\mathcal A_2
\ena
where
\bna
\mathcal A_1&=&
\sum_{m_1\geq 1}\sum_{m_2\geq 1}\frac{\lambda_f(m_2)}{(m_1^2m_2)^{1/2}}
\sum_\phi\frac{h(\bm\mu_\phi)V_k(m_1^2m_2,\bm{\mu_\phi})}
{\mathcal N_j}A_{\phi}(m_2,m_1){A_\phi(p,p)},\\
\mathcal A_2&=&\sum_{m_1\geq 1}\sum_{m_2\geq 1}\frac{\lambda_f(m_2)}{(m_1^2m_2)^{1/2}}
\sum_\phi\frac{h(\bm\mu_\phi)\tilde V_k(m_1^2m_2,\bm{\mu_\phi})}
{\mathcal N_j}A_{\phi}(m_1,m_2){A_\phi(p,p)}.
\ena
Thus Theorem 1.1 follows from
\bea
\mathcal A_1&=&
\frac{\lambda_f(p)}{p^{3/2}}\frac{1}{192\pi^5}
\iint_{\re(\bm\mu)=0}h(\bm \mu)d_{\mathrm{spec}}(\bm\mu)
+
O_{k,\epsilon}(p^{\frac{7}{32}+\epsilon}T^{\frac{5}{2}+\epsilon}M^2)
\label{Estimat-A1},\\
\mathcal A_2&=&
\frac{\lambda_f(p)}{p^{3/2}}\frac{1}{192\pi^5}
\iint_{\re(\bm\mu)=0}h(\bm \mu)\prod_{j=1}^3\frac{\Gamma(\frac{k}{2}+\mu_j)}
{\Gamma(\frac{k}{2}-\mu_j)}d_{\mathrm{spec}}(\bm\mu)
+
O_{k,\epsilon}(p^{\frac{7}{32}+\epsilon}T^{\frac{5}{2}+\epsilon}M^2).
\label{Estimat-A2}
\eea
Since the proof of (\ref{Estimat-A2}) is the same as that of (\ref{Estimat-A1}).
We only prove (\ref{Estimat-A1}).
\medskip

For $\mathcal A_1$,
by letting
\bna
 H_{y}(\bm \mu):=h(\bm \mu)V_k(y;\bm\mu)
\ena
 and applying the Kunzetsov's trace formula in section \ref{subsec-kuznetsov}, one has
\bna
\mathcal A_1=\mathcal D_1+\mathcal R_{1,w_4}+\mathcal R_{1,w_l}-\mathcal E_{1,\max}-\mathcal E_{1,\min},
\ena
where
\bna
\mathcal D_1
&=&\frac{\lambda_f(p)}{p^{3/2}}\frac{1}{192\pi^5}
\iint_{\re(\bm\mu)=0}h(\bm\mu)V_k(p^3,\bm{\mu})d_{\mathrm{spec}}\bm\mu,\\
\mathcal R_{1,w_4}
&=&\sum_{m_1\geq 1}\sum_{m_2\geq 1}\frac{\lambda_f(m_2)}{(m_1^2m_2)^{1/2}}
\sum_{\epsilon\in\{\pm 1\}}\sum_{D_2\mid D_1\atop{pD_1=m_2D_2^2}}
\frac{\tilde S(-\epsilon m_1,p,p;D_2,D_1)}{D_1D_2}
\Phi_{w_4}\left(\frac{\epsilon m_1p^2}{D_1D_2};H_{m_1^2m_2}\right),\\
\mathcal R_{1,w_5}
&=&\sum_{m_1\geq 1}\sum_{m_2\geq 1}\frac{\lambda_f(m_2)}{(m_1^2m_2)^{1/2}}
\sum_{\epsilon\in\{\pm 1\}}\sum_{D_1\mid D_2\atop{pD_2=m_1D_1^2}}
\frac{\tilde S(\epsilon m_2,p,p;D_1,D_2)}{D_1D_2}\Phi_{w_5}
\left(\frac{\epsilon m_2p^2}{D_1D_2};H_{m_1^2m_2}\right),\\
\mathcal R_{1,w_l}
&=&\sum_{m_1\geq 1}\sum_{m_2\geq 1}\frac{\lambda_f(m_2)}{(m_1^2m_2)^{1/2}}
\sum_{\epsilon_1,\epsilon_2\in\{\pm 1\}}\sum_{D_1,D_2}
\frac{S(\epsilon_2 m_1,\epsilon_1 m_2,p,p;D_1,D_2)}{D_1D_2}\\
&&\qquad\times\Phi_{w_l}
\left(-\frac{\epsilon_2 pm_1D_2}{D_1^2},-\frac{\epsilon_1 pm_2D_1}{D_2^2};H_{m_1^2m_2}\right),
\ena
and
\bna
\mathcal E_{1,\max}&=&
\sum_{m_1\geq 1}\sum_{m_2\geq 1}\frac{\lambda_f(m_2)}{(m_1^2m_2)^{1/2}}
\sum_{g}\frac{1}{2\pi i}\nonumber\\
&&\quad\int_{\re(u)=0}
\frac{H_{m_1^2m_2}(u+it_g,u-it_g,-2u)}
{\mathcal N^{\max}_{u,g}}
A^{\max}_{u,g}(m_2,m_1){A^{\max}_{u,g}(p,p)}du,\\
\mathcal E_{1,\min}&=&
\sum_{m_1\geq 1}\sum_{m_2\geq 1}\frac{\lambda_f(m_2)}{(m_1^2m_2)^{1/2}}
\frac{1}{24(2\pi i)^2}\nonumber\\
&&\quad\iint_{\re(\bm\mu)=0}
\frac{H_{m_1^2m_2}(\bm\mu)}{\mathcal N^{\min}_{\nu_1,\nu_2}}A_{\nu_1,\nu_2}^{\min}(m_2,m_1)
A_{\nu_1,\nu_2}^{\min}(p,p)d\bm \mu
\ena
The main term in (\ref{Estimat-A1})
comes from the estimation on  $\mathcal D_1$ in (\ref{D-1}),
and the error term  comes from the contribution of
$\mathcal E_{1,\max}$ in (\ref{Estimat-maximal-Eisenstein}).
For $\mathcal E_{1,\min}$ and $\mathcal R_{1,w_4},\mathcal R_{1,w_5}$, $\mathcal R_{1,w_6}$,
we will show that their contribution is negligible under
the condition in Theorem \ref{thm-main}.

\subsection{Estimation on the continuous spectrum}
We consider $\mathcal E_{1,\min}$ firstly.
Note that $H_y(\bm\mu)=h(\bm \mu)V_k(y,\bm\mu)$. By the integral expression of $V_k(y,\bm\mu)$
in (\ref{V-y-k-mu})
and the fact that
\bna
\sum_{m_1,m_2\geq 1}\frac{\lambda_f(m_2)A_{\nu_1,\nu_2}^{\min}(m_2,m_1)}{(m_1^{2}m_2)^{s+\frac{1}{2}}}
=L\left(\frac{1}{2}+s-\mu_1,f\right)L\left(\frac{1}{2}+s-\mu_2,f\right)
L\left(\frac{1}{2}+s-\mu_3,f\right)
\ena
for $\re(s)=3$,
one has
\bna
\mathcal E_{1,\min}=\frac{1}{24(2\pi i)^2}\iint_{\re(\bm\mu)=0}
A^{\min}_{\nu_1,\nu_2}(p,p)\frac{h(\bm{\mu})}{\mathcal N^{\min}_{\nu_1,\nu_2}}
\mathcal I^{\min}_k(\bm\mu)d\bm\mu,
\ena
where
\bna
\mathcal I^{\min}_k(\bm\mu)
=\frac{1}{2\pi i}
\int_{(3)}G(s)\prod_{i=1}^3\frac{\Gamma_\R\left(s+\frac{1}{2}+\frac{k-1}{2}-\mu_i\right)
\Gamma_\R\left(s+\frac{1}{2}+\frac{k+1}{2}-\mu_i\right)}
{\Gamma_\R\left(\frac{1}{2}+\frac{k-1}{2}-\mu_i\right)
\Gamma_\R\left(\frac{1}{2}+\frac{k+1}{2}-\mu_i\right)}
L\left(\frac{1}{2}+s-\mu_i,f\right)
\frac{ds}{s}.
\ena
For $\mathcal I^{\min}_k(\bm\mu)$,
moving the line of integration to $\re(s)=\epsilon$ and applying the
subconvexity bound (see \cite{Go1982})
\bna
L(1/2+it,f)\ll_k (1+|t|)^{1/3+\epsilon},
\ena
one has
 \bna
\mathcal I_k^{\min}(\bm \mu)\ll_{\epsilon,k} \prod_{j=1}^3 (1+|\im(\mu_j)|)^{\frac{1}{3}+\epsilon}.
\ena
It gives that
\bna
\mathcal E_{1,\min}
\ll_{k,\epsilon}\iint_{\re(\bm\mu)=0}{A_{\nu_1,\nu_2}^{\min}(p,p)}
\frac{h(\bm{\mu})}{\mathcal N^{\min}_{\nu_1,\nu_2}}
\prod_j (1+|\im(\mu_j)|)^{\frac{1}{3}+\epsilon}
d\bm\mu.
\ena
Note that $A^{\min}_{ \nu_1,\nu_2}(p,p)=O(1)$
and
\bna
\mathcal N^{\min}_{\nu_1,\nu_2}=\frac{1}{16}\prod_{j=1}^3|\zeta(1+3\nu_{\pi,j})|^2
\gg\prod_{j=1}^3\left(\frac{1}{\log(1+3\im \nu_{\pi,j})}\right)^2.
\ena
One has
\bea
\mathcal E_{1,\min}
\ll_{k,\epsilon} T^{1+\epsilon}M^2.\label{Estimat-mini-Eisenstein}
\eea

Next we consider $\mathcal E_{1,\max}$.
By similar argument as above one has
\bna
\mathcal E_{1,\max}
=\sum_{g}\frac{1}{2\pi i}\int_{\re(u)=0}A^{\max}_{u,g}(p,p)
\frac{h(u+it_g,u-it_g,-2u)}{\mathcal N^{\max}_{u,g}}
\mathcal I^{\max}_{k}(u+it_g,u-it_g,-2u)du,
\ena
where
\bna
\mathcal I^{\max}_{k}(\bm\mu)
&=&
\frac{1}{2\pi i}\int_{(3)}
\prod_{i=1}^3\frac{\Gamma_\R\left(s+\frac{1}{2}+\frac{k-1}{2}-\mu_i\right)
\Gamma_\R\left(s+\frac{1}{2}+\frac{k+1}{2}-\mu_i\right)}
{\Gamma_\R\left(\frac{1}{2}+\frac{k-1}{2}-\mu_i\right)
\Gamma_\R\left(\frac{1}{2}+\frac{k+1}{2}-\mu_i\right)}\\
&&\qquad \quad L\left(\frac{1}{2}+s+2u,f\right)L\left(\frac{1}{2}+s-u,f\otimes g\right)
G(s)\frac{ds}{s}.
\ena
For $\mathcal I^{\max}_{k}(\bm\mu)$,
by moving the line of integration to $\re(s)=\frac{1}{2}+\epsilon$
and applying the fact that
\bna
L(1+\epsilon+2u,f)\ll 1,\quad L\left(1+\epsilon-u,f\otimes g\right)\ll1,
\ena
which follow from the Ramanujar-Deligue's bound and the property
of Rankin-Selberg $L$-functions
(see \cite{RS1996}),
one has
\bna
\mathcal I^{\max}_{k}(\bm\mu)
&\ll&
\int_{(\frac{1}{2}+\epsilon)}
\prod_{i=1}^3\frac{\Gamma_\R\left(s+\frac{1}{2}+\frac{k-1}{2}-\mu_i\right)
\Gamma_\R\left(s+\frac{1}{2}+\frac{k+1}{2}-\mu_i\right)}
{\Gamma_\R\left(\frac{1}{2}+\frac{k-1}{2}-\mu_i\right)
\Gamma_\R\left(\frac{1}{2}+\frac{k+1}{2}-\mu_i\right)}G(s)\frac{ds}{s}\\
&\ll_{k,\epsilon}&\prod_{j=1}^3\left(1+|\im \mu_j|\right)^{\frac{1}{2}+\epsilon}.
\ena
Moreover, by the definition of $A^{\max}_{u,g}(m,n)$
in (\ref{Fourier-coef-maximal-Eisenstein-1}) and (\ref{Fourier-coef-maximal-Eisenstein-2}), and the bound $\lambda_g(p)\ll p^{\frac{7}{64}+\epsilon}$
in \cite{KS2003}, one has
$A_{u,g}^{\max}(p,p)\ll p^{\frac{7}{32}+\epsilon}$.
These together with
\bna
\mathcal N_{u,g}^{\max}=
8L(1,\mathrm{Ad}^2g)|L(1+3u,g)|^2\gg \left(\frac{1}{1+\log |u|}\right)
\ena
give that
\bea
\mathcal E_{1,\max}
&\ll_{k,\epsilon}&p^{\frac{7}{32}+\epsilon}
\sum_{g}\int_{\re(u)=0}
\frac{h(u+it_g,u-it_g,-2u)}{\mathcal N_{u,g}}
\left(1+|\im u+t_g|\right)^{\frac{1}{2}+\epsilon}\nonumber\\
&&\qquad\qquad\quad
\left(1+|\im u-t_g|\right)^{\frac{1}{2}+\epsilon}
\left(1+|2\im u|\right)^{\frac{1}{2}+\epsilon}
d\mu\nonumber\\
&\ll_{k,\epsilon}&p^{\frac{7}{32}+\epsilon}T^{\frac{3}{2}+\epsilon}M
\sum_{g\atop{T-M\leq it_g\leq T+M}}1\nonumber\\
&\ll_{k,\epsilon}&p^{\frac{7}{32}+\epsilon}T^{\frac{5}{2}+\epsilon }M^2,\label{Estimat-maximal-Eisenstein}
\eea
where we have used the Weyl's law for Hecke-Mass cusp forms for $SL_2(\Z)$ (see \cite{Iw2002}).

\subsection{Estimation on the diagonal term $\mathcal D_1$}
For the diagonal term $\mathcal D_1$, by Lemma \ref{lemma-proper-V-k-mu}, we have
\bea
\mathcal D_1=\frac{\lambda_f(p)}{p^{3/2}}\frac{1}{192\pi^5}\left( 1+O_B\left(\frac{p}{T}\right)^{3B}\right)
\iint_{\re(\bm\mu)=0}h(\bm \mu)d_{\mathrm{spec}}(\bm\mu)\label{D-1}
\eea
for $0<B<\frac{k-1}{2}$.
The choice of $h(\bm\mu)$ in section \ref{subsec-choice-testfunc} gives
\bna
\iint_{\re(\bm\mu)=0}h(\bm \mu)d_{\mathrm{spec}}(\bm\mu)\asymp T^3M^2.
\ena
Recall that $k\geq 12$. By
(\ref{Estimat-mini-Eisenstein}) and
(\ref{Estimat-maximal-Eisenstein}),
$D_1$ gives the main term in (\ref{Estimat-A1})
if
\bea
T\gg_{k,\epsilon} p^{3+\frac{7}{16}+\epsilon}\label{main-condition}.
\eea

\subsection{Estimation on other geometric terms}
In this subsection, we show that the contribution from
other geometric terms are negligible.
For $\mathcal R_{1,w_4}$ and $\mathcal R_{1,w_l}$, it follows immediately from
the application of the truncation Lemmas \ref{lemma-truncation-1} and \ref{lemma-truncation-2},
respectively.
To show that $\mathcal R_{1,w_5}$ is negligible,
one needs to open the incomplete Kloosterman sum, rearrange the summation and apply
the Voronoi formula for $GL_2$.

\subsubsection{The term $\mathcal R_{1,w_4}$}
Consider $\mathcal R_{1,w_4}$ firstly.
By the property of $V_k(y;\bm\mu)$ in lemma \ref{lemma-proper-V-k-mu},
the terms in summations over $m_1$ and $m_2$ are negligible for those $m_1^2m_2> T^{3+\epsilon}$.
By Lemma \ref{lemma-truncation-1},
the contribution of terms in summations over $D_1$ and $D_2$
is negligible if
\bna
\frac{p^2m_1}{D_1D_2}=\frac{p^{3/2}m_1\sqrt{m_2}}{D_1^{3/2}}\leq T^{3-\epsilon}.
\ena
Thus one needs only to consider
\bna
\sum_{m_1,m_2\geq 1\atop{ m_1^2m_2\leq T^{3+\epsilon}}}\frac{\lambda_f(m_2)}{(m_1^2m_2)^{1/2}}
\sum_{\epsilon\in\{\pm 1\}}\sum_{D_2\mid D_1\atop{pD_1=m_2D_2^2
\atop{1\leq D_1\leq \frac{p(m_1^2m_2)^{1/3}}{T^{2-\epsilon}}}}}
\frac{\tilde S(-\epsilon m_1,p,p;D_2,D_1)}{D_1D_2}\Phi_{w_4}\left(\frac{\epsilon p^2m_1}{D_1D_2};H_{m_1^2m_2}\right).
\ena
Note that $m_1^2m_2\leq T^{3+\epsilon}$ and $1\leq D_1\leq \frac{p(m_1^2m_2)^{1/3}}{T^{2-\epsilon}}$
give $p\geq T^{1-\epsilon}$, which contradicts  with (\ref{main-condition}).
Thus these terms vanish and $\mathcal R_{1,w_4}$ is negligible.

\subsubsection{The term $\mathcal R_{1,w_l}$}
For $\mathcal R_{1,w_l}$,
by the property of $V_k(y,\bm\mu)$ in lemma \ref{lemma-proper-V-k-mu},
the terms in summations over $m_1$ and $m_2$ are
negligible for those $m_1^2m_1\leq T^{3+\epsilon}$.
Let
\bna
\mathcal Y:=p^{1/2}
\min\left\{\frac{m_1^{1/3}m_2^{1/6}}{D_1^{1/2}},
\frac{m_2^{1/3}m_1^{1/6}}{D_2^{1/2}}
\right\}.
\ena
By lemma \ref{lemma-truncation-2},
 the contribution is negligible
for those terms in summations over $D_1$ and $D_2$ satisfying $\mathcal Y\leq T ^{1-\epsilon}$.
Thus we need only to estimate
\bna
&&\sum_{m_1,m_2\geq 1\atop{m_1^2m_2\leq T^{3+\epsilon}}}
\frac{\lambda_f(m_2)}{(m_1^2m_2)^{1/2}}
\sum_{\epsilon_1,\epsilon_2\in\{\pm 1\}}
\sum_{D_1,D_2\atop{\mathcal Y> T^{1-\epsilon}}}
\frac{S(\epsilon_2 m_1,\epsilon_1 m_2,p,p;D_1,D_2)}{D_1D_2}\\
&&\qquad\qquad
\Phi_{w_l}
\left(-\frac{\epsilon_2 pm_1D_2}{D_1^2},-\frac{\epsilon_1 pm_2D_1}{D_2^2};H_{m_1^2m_2}\right).
\ena
Note that  $m_1^2m_2\leq T^{3+\epsilon}$ and $\mathcal Y>T^{1-\epsilon}$
give $p\geq T^{1-\epsilon}$,  which contradicts with  (\ref{main-condition}).
Thus these terms vanish and $\mathcal R_{1,w_l}$ is negligible.

\subsubsection{The term $\mathcal R_{w_5}$}
Consider $\mathcal R_{w_5}$.
By the similar argument in previous sections,
one needs
only to consider the contribution of
\bna
\mathcal R^*:&=&\sum_{m_1,m_2\geq 1\atop{ T^{\frac{8}{3}}\leq m_1^2m_2\leq T^{3+\epsilon}}}
\frac{\lambda_f(m_2)}{(m_1^2m_2)^{1/2}}
\sum_{\epsilon\in\{\pm 1\}}\\
&&\quad
\sum_{D_1\mid D_2\atop{pD_2=m_1D_1^2
\atop{1\leq D_2\leq \frac{p(m_2^2m_1)^{1/3}}{T^{2-\epsilon}}}}}
\frac{\tilde S(-\epsilon m_1,p,p;D_2,D_1)}{D_1D_2}\Phi_{w_4}
\left(\frac{\epsilon p^2m_1}{D_1D_2};H_{m_1^2m_2}\right),
\ena
since other terms either vanish or  are negligible.

We show that $\mathcal R^*$ is also negligible.
Recall the smooth partition of unity
\bna
1=\sum_{\alpha\geq 0}\omega\left(\frac{m_1^2m_2}{N_\alpha}\right),
\ena
where $\omega$ is a function which is smooth and compactly supported on $[\frac{1}{2},\frac{5}{2}]$
and $N_\alpha=2^\alpha$.
One has
\bna
\mathcal R^*
&\ll&\sum_{\alpha\geq 0\atop{T^{\frac{8}{3}}\ll N_\alpha\ll T^{3+\epsilon}}}
\sum_{m_1,m_2\geq 1}\omega\left(\frac{m_1^2m_2}{N_\alpha}\right)\frac{\lambda_f(m_2)}
{\left(m_1^2m_2\right)^{1/2}}
\sum_{\epsilon\in\{\pm 1\}}\\
&&\qquad\sum_{D_1\mid D_2\atop{pD_2=m_1D_1^2}}
\frac{\tilde S(\epsilon m_2,p,p;D_1,D_2)}{D_1D_2}\Phi_{w_5}
\left(\frac{\epsilon m_2p^2}{D_1D_2};H_{m_1^2m_2}\right).
\ena
Let $D_2=D_1\delta$. We open the incomplete Kloosterman sum,
 rearrange the summation and then obtain
\bea
\nonumber
\mathcal R^*
&\ll&
\sum_{T^{8/3}\ll N_\alpha\ll T^{3+\epsilon}}\sum_{m_1\geq 1}\frac{1}{m_1}
\sum_{\epsilon\in\{\pm 1\}}\sum_{\delta,D_1\geq 1\atop{p\delta=m_1D_1}}
\frac{1}{D_1^2\delta}\sum_{C_1(\bmod D_1),\,C_2(\bmod D_1\delta)\atop{(C_1,D_1)=(C_2,\delta)=1}}
e\left(\frac{p\overline C_1C_2}{D_1}+p\frac{\overline C_2}{\delta}\right)\\
&&\quad\sum_{m_2\geq 1}\omega\left(\frac{m_1^2m_2}{N_\alpha}\right)\frac{\lambda_f(m_2)}{\sqrt{m_2}}
\Phi_{w_5}\left(\frac{\epsilon p^2m_2}{D_1^2\delta},H_{m_1^2m_2}\right)
e\left(\epsilon m_2\frac{C_1}{D_1}\right).
\label{Last-term}
\eea
Thus one can apply the following $GL(2)$ Voronoi formula (see formula (4.71) in \cite{IK2004}).
\begin{lemma}\label{lemma-voronoi}
Let $c\geq 1$ and $(a,c)=1$.
Let $F$ be a smooth, compactly supported function on $\R^+$. One has
\bna
\sum_{m\geq 1}\lambda_f(m)e\left(\frac{am}{c}\right)F(m)
=\frac{1}{c}\sum_{n\geq 1}\lambda_f(n)e\left(-\frac{\overline a n}{c}\right)G(n),
\ena
where $G(y)=2\pi i^k\int_{0}^\infty F(x)J_{k-1}\left(\frac{4\pi\sqrt{xy}}{c}\right)dx$.
Here $J_{k}(y)$ is the $J$-Bessel function.
\end{lemma}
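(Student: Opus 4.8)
The plan is to derive this classical identity from the modularity of $f$ via the functional equation of the additively twisted Dirichlet series, as in \cite{IK2004}. First I would fix $b,d\in\Z$ with $ad-bc=1$, so that $\gamma=\bma a&b\\c&d\ema\in SL_2(\Z)$ and $d\equiv\overline a\pmod c$. Applying the modular relation $f(\gamma z)=(cz+d)^k f(z)$ along the line $z=-\frac{d}{c}+\frac{\rmi}{cu}$, $u>0$, and expanding both sides via the Fourier expansion of $f$ yields the theta-type identity
\[
\sum_{m\geq 1}\lambda_f(m)m^{\frac{k-1}{2}}e\!\left(\frac{am}{c}\right)\rme^{-2\pi mu/c}
=\frac{\rmi^k}{u^k}\sum_{n\geq 1}\lambda_f(n)n^{\frac{k-1}{2}}e\!\left(-\frac{\overline a n}{c}\right)\rme^{-2\pi n/(cu)}.
\]
Taking Mellin transforms in $u$ converts this into the functional equation relating $L_a(s):=\sum_{m\geq 1}\lambda_f(m)e(am/c)m^{-s}$ to $L_{-\overline a}(1-s)$, the archimedean factor being the single $\Gamma$-quotient $\Gamma\!\left(\tfrac{k+1}{2}-s\right)/\Gamma\!\left(s+\tfrac{k-1}{2}\right)$ times a power of $c/2\pi$; since $f$ is cuspidal, $L_a$ is entire and of polynomial growth in vertical strips.

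Next, for the given smooth, compactly supported $F$ on $\R^+$, I would set $\widetilde F(s)=\int_0^\infty F(x)x^{s-1}\,dx$, which is entire and rapidly decreasing on vertical lines. Since $F$ has compact support the left-hand side is a finite sum, so Mellin inversion gives
\[
\sum_{m\geq 1}\lambda_f(m)e\!\left(\frac{am}{c}\right)F(m)=\frac{1}{2\pi\rmi}\int_{(\sigma)}\widetilde F(s)\,L_a(s)\,ds
\]
for $\sigma>1$. I would then shift the contour to a line with $\re(s)<0$; no poles are crossed, since $f$ is cuspidal. On the new line $L_{-\overline a}(1-s)$ converges absolutely, so after inserting the functional equation one may expand it into its Dirichlet series and interchange the sum over $n$ with the integral, which is justified by the polynomial growth (Stirling) of the $\Gamma$-quotient against the rapid decay of $\widetilde F$. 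The result is $\tfrac{1}{c}\sum_{n\geq 1}\lambda_f(n)e(-\overline a n/c)G(n)$, where $G(y)$ is an explicit inverse-Mellin integral of $\widetilde F(1-s)$ against the $\Gamma$-quotient times a power of $y/c^2$.

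Finally I would identify $G$ with the Bessel transform. Using the Mellin transform of the $J$-Bessel function,
\[
\int_0^\infty J_{k-1}(t)\,t^{s-1}\,dt=\frac{2^{s-1}\,\Gamma\!\left(\frac{k-1+s}{2}\right)}{\Gamma\!\left(\frac{k+1-s}{2}\right)},
\]
together with the substitution $t=4\pi\sqrt{xy}/c$, one sees that the $\Gamma$-quotient appearing above is, up to elementary factors, the Mellin transform of $x\mapsto J_{k-1}(4\pi\sqrt{xy}/c)$. By the Mellin--Parseval identity the inverse-Mellin integral defining $G$ then collapses to $G(y)=2\pi\rmi^k\int_0^\infty F(x)J_{k-1}(4\pi\sqrt{xy}/c)\,dx$, and a short computation confirms that the numerical constant, the power of $c$, and the sign $\rmi^k$ come out as claimed.

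The main obstacle is the archimedean bookkeeping: one must carry the weight-$k$ factor $\Gamma\!\left(s+\tfrac{k-1}{2}\right)$ correctly through the functional equation and line it up with the Bessel Mellin quotient so that the kernel emerges exactly as $2\pi\rmi^k J_{k-1}(4\pi\sqrt{xy}/c)$ with the right power of $c$ and the right sign. The analytic inputs---entirety and polynomial growth of $L_a$, rapid decay of $\widetilde F$, and the contour shift together with the interchange of summation and integration---are routine for a holomorphic level-one cusp form; indeed the statement is formula (4.71) in \cite{IK2004}.
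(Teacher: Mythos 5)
Your proposal is correct: the functional equation of the additively twisted $L$-series derived from $f(\gamma z)=(cz+d)^kf(z)$ along $z=-\tfrac{d}{c}+\tfrac{\rmi}{cu}$, Mellin inversion with a contour shift (no poles, $f$ cuspidal), and the identification of the resulting $\Gamma$-quotient with the Mellin transform of $J_{k-1}$ all check out, including the constant $2\pi\rmi^k$ and the power of $c$. The paper gives no proof of this lemma at all --- it simply cites formula (4.71) of \cite{IK2004} --- and your argument is precisely the standard derivation underlying that citation, so there is nothing to compare beyond noting that you have supplied the proof the paper outsources.
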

For the summation over $m_2$ in (\ref{Last-term}),
we apply the Voronoi formula in the above lemma  and obtain
\bna
&&\sum_{m_2\geq 1}\omega\left(\frac{m_1^2m_2}{N_\alpha}\right)\frac{\lambda_f(m_2)}{\sqrt{m_2}}
\Phi_{w_5}\left(\frac{\epsilon p^2m_2}{D_1^2\delta},H_{m_1^2m_2}\right)
e\left(\epsilon m_2\frac{C_1}{D_1}\right)\\
&=&\frac{1}{D_1}\sum_{m_2\geq 1}\lambda_f(m_2)
e\left(-\frac{\epsilon \overline C_1m_2}{D_1}\right)
G(m_2),
\ena
where
\bna
G(m_2)=2\pi i^k\int_0^\infty\omega\left(\frac{m_1^2x}{N_\alpha}\right)\frac{1}{x^{1/2}}
\Phi_{w_5}\left(\frac{\epsilon p^2x}{D_1^2\delta},H_{m_1^2m_2}\right)
J_{k-1}\left(\frac{4\pi\sqrt{xm_2}}{D_1}\right)dx.
\ena
\begin{lemma}\label{lemma-truncation-3}
We have
\bna
G(m_2)\ll_{j,k,\epsilon}\frac{\sqrt{N_\alpha}}{m_1}
\left(\frac{p^{1+\epsilon}}{N_\alpha^{\frac{1}{6}-\epsilon}m_2^{\frac{1}{2}}}
\right)^j
\ena
for any $j\in\mathbb{N}_0$.
\end{lemma}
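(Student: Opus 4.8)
The plan is first to rescale the $x$-integral defining $G(m_2)$ so as to extract the prefactor $\sqrt{N_\alpha}/m_1$, and then to obtain the power $j$ by integrating by parts $j$ times in the resulting normalised variable against the oscillation of the Bessel factor, treating $\Phi_{w_5}(\,\cdot\,;H_{m_1^2m_2})$ as an amplitude whose derivatives are governed by Lemma~\ref{lemma-truncation-1}. Since $\omega$ is supported in $[\tfrac12,\tfrac52]$, in $G(m_2)$ the variable $x$ runs over $x\asymp N_\alpha/m_1^2$; substituting $x=(N_\alpha/m_1^2)u$ and using $p\delta=m_1D_1$ (so $D_1=p\delta/m_1$) one rewrites
\[
G(m_2)=2\pi i^k\,\frac{\sqrt{N_\alpha}}{m_1}\int_0^\infty\omega(u)\,u^{-1/2}\,\Phi_{w_5}\big(\epsilon Y_1u;H_{m_1^2m_2}\big)\,J_{k-1}\big(Y_2\sqrt u\big)\,du,\qquad Y_1:=\frac{N_\alpha}{\delta^3},\ \ Y_2:=\frac{4\pi\sqrt{m_2N_\alpha}}{p\delta}.
\]
The reason for isolating these two scales is the elementary identity $Y_1^{1/3}/Y_2=p/(4\pi N_\alpha^{1/6}m_2^{1/2})$, which is, up to $\epsilon$-powers, exactly the quantity that will be gained at each integration by parts.

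Next I would reduce the ranges. Since $\Phi_{w_5}(y;h)=\iint h(\bm\mu)K_{w_4}(-y;-\bm\mu)\,d_{\mathrm{spec}}\bm\mu$ obeys the same bounds as $\Phi_{w_4}$ in Lemma~\ref{lemma-truncation-1}, the integral is negligible unless $Y_1u\gg T^{3-\epsilon}$, i.e. (as $u\asymp1$) unless $Y_1\gg T^{3-\epsilon}$; but also $Y_1^{1/3}=N_\alpha^{1/3}/\delta\leq N_\alpha^{1/3}\leq T^{1+\epsilon}$, so in the surviving range $Y_1^{1/3}$ lies in the window $[T^{1-\epsilon},T^{1+\epsilon}]$ and hence $T+Y_1^{1/3}\asymp Y_1^{1/3}$ up to $T^{O(\epsilon)}$. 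Lemma~\ref{lemma-truncation-1} then gives, for $u\asymp1$ and all $a\geq0$, $\partial_u^a\Phi_{w_5}(\epsilon Y_1u;H_{m_1^2m_2})\ll_a T^3M^2(T+Y_1^{1/3})^a\ll_{a,\epsilon}T^3M^2\,T^{a\epsilon}(Y_1^{1/3})^a$. I would then split off the regime $Y_2\leq1$: there $J_{k-1}(Y_2\sqrt u)\ll Y_2^{k-1}\leq1$, while $Y_2\leq1$ forces $m_2^{1/2}\leq p\delta/N_\alpha^{1/2}$, so $p/(N_\alpha^{1/6}m_2^{1/2})\gg N_\alpha^{1/3}/\delta\gg T^{1-\epsilon}\geq1$ and the claimed bound holds trivially for every $j\geq0$. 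Hence one may assume $Y_2\gg1$ and insert the standard oscillatory expansion $J_{k-1}(Y_2\sqrt u)=\sum_{\pm}e^{\pm iY_2\sqrt u}W_\pm(Y_2\sqrt u)$, valid for $Y_2\sqrt u\gg1$, with $z^aW_\pm^{(a)}(z)\ll z^{-1/2}$.

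Finally I would integrate by parts $j$ times in $u$ against $e^{\pm iY_2\sqrt u}$. Since $\tfrac{d}{du}(\pm iY_2\sqrt u)$ is nonvanishing for $u\asymp1$, each step is legitimate, produces a factor $\ll Y_2^{-1}$, and applies one $\partial_u$ to the amplitude $\omega(u)u^{-1/2}\Phi_{w_5}(\epsilon Y_1u)W_\pm(Y_2\sqrt u)$: a derivative hitting $\omega(u)u^{-1/2}$ or $W_\pm(Y_2\sqrt u)$ costs $O(1)$ (the latter by $z^aW_\pm^{(a)}(z)\ll z^{-1/2}$), while a derivative hitting $\Phi_{w_5}(\epsilon Y_1u)$ costs $\ll T^\epsilon Y_1^{1/3}$ by the displayed bound. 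Thus $j$ integrations by parts give a gain $\ll(T^\epsilon Y_1^{1/3}/Y_2)^j=(T^\epsilon p/(4\pi N_\alpha^{1/6}m_2^{1/2}))^j$, and absorbing $T^\epsilon$ into $N_\alpha^\epsilon$ (legitimate as $N_\alpha\gg T^{8/3}$) this is $\ll(p^{1+\epsilon}/(N_\alpha^{1/6-\epsilon}m_2^{1/2}))^j$; multiplied by the prefactor $\sqrt{N_\alpha}/m_1$ (the overall size $\ll T^3M^2$ of $\Phi_{w_5}$, present throughout as in Lemmas~\ref{lemma-truncation-1}--\ref{lemma-truncation-2}, being absorbed in the estimates of this subsection), this yields the claim.

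\textbf{Where the difficulty lies.} The one non-formal step is keeping the cost of differentiating $\Phi_{w_5}$ down to $\asymp T$ up to $T^\epsilon$: this is not given directly by Lemma~\ref{lemma-truncation-1}, whose bound involves the two a priori distinct scales $T$ and $|y|^{1/3}$, but it follows once one notices that the very inequality $Y_1\gg T^{3-\epsilon}$ that makes the term survive simultaneously confines $Y_1^{1/3}=N_\alpha^{1/3}/\delta$ to the narrow window $[T^{1-\epsilon},T^{1+\epsilon}]$, so that those two scales coincide in the present situation. Everything else---the rescaling, the trivial disposal of the regime $Y_2\lesssim1$, and the observation that the Bessel amplitude $W_\pm$ contributes no cost under differentiation---is routine.
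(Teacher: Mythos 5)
Your argument is correct and essentially the paper's: rescale to extract $\sqrt{N_\alpha}/m_1$, then integrate by parts $j$ times against the Bessel oscillation, bounding each $u$-derivative of $\Phi_{w_5}$ by $T^{\epsilon}Y_1^{1/3}$ via Lemma \ref{lemma-truncation-1} (using that $Y_1\gg T^{3-\epsilon}$ and $N_\alpha\ll T^{3+\epsilon}$ force $T+Y_1^{1/3}\ll T^{\epsilon}Y_1^{1/3}$) and gaining $Y_2^{-1}=R^{-1}$ per step, with $Y_1^{1/3}/Y_2=p/(4\pi N_\alpha^{1/6}m_2^{1/2})$ and $m_1D_1=p\delta$ exactly as in the text. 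The only (immaterial) difference is mechanical: the paper performs the integration by parts through the recurrence $\frac{d}{dy}\bigl((R\sqrt y)^{s+1}J_{s+1}(R\sqrt y)\bigr)=\tfrac{R^2}{2}(R\sqrt y)^{s}J_{s}(R\sqrt y)$, which is uniform in $R$ and avoids your case split at $Y_2\asymp 1$ and the asymptotic expansion of $J_{k-1}$; note also that both your write-up and the paper's silently drop the factor $T^3M^2$ from the trivial bound on $\Phi_{w_5}$, which strictly speaking should appear in the stated estimate.
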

\begin{proof}
For $G(m_2)$, we change the variable $t=\frac{m_1^2}{N_\alpha}x$ to obtain
\bna
G(m_2)=2\pi i^k\frac{\sqrt{N_\alpha}}{m_1}
\int_{0}^\infty\omega(t)
\Phi_{w_5}\left(\frac{\epsilon p^2N_\alpha}
{D_1^2\delta m_1^2}t,H_{m_1^2m_2}\right)
J_{k-1}\left(\frac{4\pi \sqrt{N_\alpha m_2t}}{m_1D_1}\right)\frac{dt}{\sqrt{t}}.
\ena
Let $R=\frac{4\pi\sqrt{N_\alpha m_2}}{m_1D_1}$.
By applying
the recurrence formula of the $J$-Bessel function
\bna
\frac{d}{dy}\left((R\sqrt{y})^{s+1}J_{s+1}(R\sqrt{y})\right)=\frac{R^2}{2}(R\sqrt{y})^sJ_{s}(R\sqrt{y}),
\ena
one has
\bna
G(m_2)
&=&2\pi i^k\frac{\sqrt{N_\alpha}}{m_1} \frac{1}{R^{k-1}}\frac{-2}{R^2}
\int_0^\infty
\left(t^{-\frac{k}{2}}\omega(t)\Phi_{w_5}\left(\frac{\epsilon p^2N_\alpha}{D_1^2\delta m_1^2}t,H_{m_1^2m_2}\right)
\right)'
\left(R\sqrt{t}\right)^{k}J_{k}(R\sqrt{t})
dt\\
&=&2\pi i^k\frac{\sqrt{N_\alpha}}{m_1} \frac{1}{R^{k-1}}\left(-\frac{2}{R^2}\right)^j\\
&&\quad
\int_0^\infty
\left(t^{-\frac{k}{2}}\omega(t)\Phi_{w_5}\left(\frac{\epsilon p^2N_\alpha}{D_1^2\delta m_1^2}t,H_{m_1^2m_2}\right)
\right)^{(j)}
\left(R\sqrt{t}\right)^{k+j-1}J_{k+j-1}(R\sqrt{t})
dt
\ena
for any $j\in\mathbb N_0$.
Note that $\Phi_{w_5}$ also satisfies Lemma \ref{lemma-truncation-1}
and one has
\bna
\left(\Phi_{w_5}\left(\frac{\epsilon p^2N_\alpha}{D_1^2\delta m_1^2}t,H_{m_1^2m_2}\right)
\right)^{(j)}
\ll T^3M^2
\left(\frac{p^2N_\alpha}{D_1^2\delta m_1^2}t\right)^{j\left(\frac{1}{3}+\epsilon\right)}.
\ena
It gives that
\bna
G(m_2)&\ll_{k,j,\epsilon}&
\frac{\sqrt{N_\alpha}}{m_1}
\left(\left.\left(\frac{p^2N}{D_1^2\delta m_1^2}\right)^{\left(\frac{1}{3}+\epsilon\right)}\right/R\right)^j\\
&\ll_{k,j,\epsilon}&\frac{\sqrt{N_\alpha}}{m_1}
\left(\frac{p^{\frac{2}{3}+\epsilon}}{N_\alpha^{\frac{1}{6}-\epsilon}m_2^{\frac{1}{2}}}
\left(\frac{m_1D_1}{\delta}\right)^{\frac{1}{3}}\right)^j
\ena
since $R=\frac{4\pi\sqrt{N_\alpha m_2}}{m_1D_1}$.
The lemma follows immediately from the fact that $m_1D_1=p\delta$.
\end{proof}

By lemma \ref{lemma-truncation-3}, the contribution is negligible for those terms in $\mathcal R^*$
satisfying
\bna
\frac{p^{1+\epsilon}}{N_\alpha^{\frac{1}{6}-\epsilon}m_2^{\frac{1}{2}}}\ll_{k,\epsilon} T^{-\epsilon}.
\ena
Note that $N_\alpha\gg T^{\frac{8}{3}}$.
Thus one needs only to consider terms in $\mathcal R^*$ satisfying the condition
\bna
p^{1+\epsilon}\gg_{k,\epsilon} N_\alpha^{\frac{1}{6}-\epsilon}T^{-\epsilon}\gg T^{\frac{4}{9}-\epsilon},
\ena
which contradicts with (\ref{main-condition}).
Thus the contribution of $\mathcal R^*$
is negligible.

\end{document}